\newenvironment{rezabib}
  {\bibdiv\biblist\setupbib}
  {\endbiblist\endbibdiv}
  \def\setupbib{\catcode`@=\active}
\def\gatherkey#1#2{\gatherkeyaux{#1}#2\gatherkeyaux}
\def\gatherkeyaux#1#2,#3\gatherkeyaux{\bib{#2}{#1}{#3}}  
 \DeclareMathOperator{\N}{N}
\newcommand{\R}{{\mathbb{R}}}
\newcommand{\C}{{\mathbb{C}}}
\renewcommand{\Re}{{\mathfrak{Re}}}
\renewcommand{\Im}{{\mathfrak{Im}}}
 \DeclareMathOperator{\modd}{mod}
\renewcommand{\b}{\beta}
\newcommand{\g}{\gamma}
\newtheorem{theorem}{Theorem}[section]
 \newtheorem{corollary}[theorem]{Corollary}
 \newtheorem{lemma}[theorem]{Lemma}
 \newtheorem{proposition}[theorem]{Proposition}
 \newtheorem{defn}[theorem]{Definition}
  \theoremstyle{remark}
\numberwithin{equation}{section}
\begin{document}

\title[Counting zeros of Dedekind zeta functions]{Counting zeros of Dedekind zeta functions}

\author[E. Hasanalizade]{Elchin Hasanalizade}
\address{Department of Mathematics and Computer Science\\
University of Lethbridge\\
4401 University Drive\\
Lethbridge, Alberta\\
T1K 3M4 Canada}
\email{e.hasanalizade@uleth.ca}

\author[Q. Shen]{Quanli Shen}
\address{Department of Mathematics and Computer Science\\
University of Lethbridge\\
4401 University Drive\\
Lethbridge, Alberta\\
T1K 3M4 Canada}
\email{quanli.shen@uleth.ca}

\author[P.J. Wong]{Peng-Jie Wong}
\address{Department of Mathematics and Computer Science\\
University of Lethbridge\\
4401 University Drive\\
Lethbridge, Alberta\\
T1K 3M4 Canada}
\email{pengjie.wong@uleth.ca}

\thanks{This research was supported by the NSERC Discovery grants RGPIN-2020-06731 of Habiba Kadiri and  RGPIN-2020-06032 of Nathan Ng. P.J.W. was supported by a PIMS postdoctoral fellowship and the University of Lethbridge.}

\begin{abstract}
Given a number field $K$ of degree $n_K$ and with absolute discriminant $d_K$, we obtain an explicit  bound for the number $N_K(T)$ of non-trivial zeros (counted with multiplicity), with height at most $T$, of the Dedekind zeta function $\zeta_K(s)$ of $K$. More precisely, we show that for $T \geq 1$,
$$
\Big| N_K (T)  - \frac{T}{\pi} \log \Big( d_K \Big( \frac{T}{2\pi e}\Big)^{n_K}\Big)\Big|
 \le    0.228 (\log d_K + n_K \log T) + 23.108  n_K  + 4.520,
$$
which improves previous results of  Kadiri and Ng, and Trudgian.  The improvement is based on ideas  from the recent work of Bennett \emph{et al.} on counting zeros of Dirichlet $L$-functions.
\end{abstract}

\subjclass[2010]{11R42}

\keywords{Zeros of Dedekind zeta functions, explicit formulae}


\maketitle

\section{Introduction}
Given a number field $K$, the Dedekind zeta function $\zeta_K(s)$ of $K$ is defined by 
\[
\zeta_K (s) = \sum_{\mathfrak{a}\neq 0} \frac{1}{\N(\mathfrak{a})^s},
\]
for $\Re(s)>1$, where the sum is over non-zero integral ideals of $K$. It is known that $\zeta_K (s)$ has an analytic continuation to a meromorphic function on $\Bbb{C}$ with only a simple pole at $s = 1$, and its zeros $\rho= \b + i \g$ encode deep arithmetic information of $K$. For instance, the generalised Riemann hypothesis,  asserting that if $\zeta_K(\rho) = 0$ and $\beta\in(0,1)$, then $\beta = \frac{1}{2}$, leads to the strongest form of the prime ideal theorem. A related prominent question is to count the zeros of $\zeta_K(s)$ in the critical strip $0<\Re(s)<1$.  For $T\geq 0$,   we set 
\[
N_K(T) = \# \{ \rho \in  \C  \mid  \zeta_K(\rho) =0,\  0  <\beta <  1, \ |\gamma| \leq T\},
\]
counted with multiplicity if there are any multiple zeros.
The estimate of $N_K(T)$ is crucial for proving  effective versions of the Chebotarev density theorem as well as bounding the least prime in the Chebotarev density theorem (see \cite{LMO, LO}).  Moreover, to make these results explicit, it is natural to further require a determination of the implied constants for the estimate of $N_K(T)$.

Adapting the arguments of Backlund \cite{Ba18},  McCurley \cite{McC84}, and  Rosser \cite{Ro41}, in \cite{KN12}, Kadiri  and  Ng showed that for $T\ge 1$, one has
\begin{equation}\label{counting-zeros}
 \Big| N_K (T)  -  \frac{T}{\pi} \log \Big( d_K\Big( \frac{T}{2\pi e}\Big) ^{n_K} \Big)  \Big|  \leq  
D_1 (\log d_K + n_K \log T) +D_2  n_K  +D_3,
\end{equation}
with admissible $(D_1,D_2,D_3)= (0.506,16.950, 7.663 )$,
where $n_K$ and $d_K$ are the degree and absolute discriminant of $K$, respectively; also, $D_1$ can be taken as small as $(\pi \log 2)^{-1}\approx 0.459$ at expense of larger $D_2  n_K  + D_3$.  This was improved by  Trudgian \cite{Tr15} (not only for Dedekind zeta functions but also for Dirichlet $L$-functions). In particular, as asserted in \cite{Tr15}, the estimate \eqref{counting-zeros} is valid with $(D_1,D_2,D_3)= (0.316, 5.872, 3.655)$, and  the constant $D_1$ in \eqref{counting-zeros} could be made as small as 0.247 (with larger $D_2  n_K  + D_3$).  Unfortunately, as pointed out by  Bennett, Martin, O'Bryant, and Rechnitzer  \cite{BMOR20}, there is an error in \cite{Tr15} that appears as  the ranges of  various parameters used in the argument of \cite{Tr15} were not verified  properly.  In  \cite{ BMOR20}, Bennett \emph{et al.} fixed this problem for Dirichlet $L$-functions.

The objective of this article is to prove the following theorem.

\begin{theorem}\label{main-thm}
Given a number field $K$ of degree $n_K$ and with absolute discriminant $d_K$ and $r_1$ real places, for any $T\ge 1$, we have
\begin{equation}\label{main-bound-1}
\Big| N_K (T)  - \frac{T}{\pi} \log \Big( d_K \Big( \frac{T}{2\pi e}\Big)^{n_K}\Big)  + \frac{r_1}{4}\Big|
\le  0.22737  \log\Big(  \frac{d_K(T+2)^{n_K}}{(2\pi)^{n_K}}  \Big)  + 23.02528 n_K  +   4.51954.
\end{equation} 
\end{theorem}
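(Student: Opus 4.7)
The plan is to follow the classical Riemann--von Mangoldt strategy by applying the argument principle to the completed Dedekind zeta function
$$\xi_K(s) = s(s-1)\, d_K^{s/2}\, \gamma_K(s)\, \zeta_K(s),$$
where $\gamma_K(s) = \pi^{-r_1 s/2}\Gamma(s/2)^{r_1}(2\pi)^{-r_2 s}\Gamma(s)^{r_2}$ is the archimedean factor. Since $\xi_K$ is entire of order one and satisfies $\xi_K(s) = \xi_K(1-s)$, its zeros in the critical strip coincide with the non-trivial zeros of $\zeta_K$ counted by $N_K(T)$. Applied to $\xi_K$ on the rectangle $R$ with vertices $\sigma_0 \pm iT$ and $(1-\sigma_0) \pm iT$ for a parameter $\sigma_0 > 1$ to be chosen, the argument principle combined with the functional equation and the conjugation symmetry $\overline{\xi_K(\bar s)}=\xi_K(s)$ reduces the problem to computing $\Delta \arg \xi_K$ along the right half $L$ of $\partial R$, yielding $\pi N_K(T) = \Delta_L \arg \xi_K(s)$.

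Next, I would evaluate the argument change along $L$ for each factor of $\xi_K$ separately. The factor $d_K^{s/2}$ contributes exactly $T\log d_K$, while $s(s-1)$ contributes a bounded term. For $\gamma_K$, I would invoke Stirling's formula with an explicit remainder applied to $\log\Gamma$ along the top horizontal segment, extracting the archimedean main term $n_K T\log(T/(2\pi e))$ together with the small discrete shift $r_1/4$ that is made explicit in \eqref{main-bound-1}, while absorbing the Stirling error into the allowed remainder. On the vertical segment $\Re s = \sigma_0$, the contribution $\Delta \arg \zeta_K$ is handled directly via the Euler product; with $\sigma_0$ chosen only slightly larger than $1$, this contribution is $O(n_K)$.

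The main technical obstacle is bounding $\Delta \arg \zeta_K(s)$ on the horizontal segment from $\sigma_0 + iT$ to $\tfrac12 + iT$. Following the refined Backlund method used by Bennett \emph{et al.}, I would dominate $|\Delta \arg \zeta_K|$ by a multiple of the number of sign changes of $\Re \zeta_K(\sigma+iT)$ for $\sigma\in[\tfrac12,\sigma_0]$, and then apply Jensen's formula on a carefully chosen disk centred on the line $\Re s = 2$ (or a similar point of convergence) whose radius is tuned so as to enclose the horizontal segment. This converts the argument-change estimate into a bound on $\log\max|\zeta_K|$ over the disk, which I would control using the functional equation together with the Phragm\'en--Lindel\"of convexity principle, producing a bound of the form $\alpha\log\!\bigl(d_K(T+2)^{n_K}(2\pi)^{-n_K}\bigr) + O(n_K)$ with an explicit small constant $\alpha$ matching the $0.22737$ of \eqref{main-bound-1}.

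What remains is to assemble the contributions and jointly optimise the free parameters --- the abscissa $\sigma_0$, the radius of the Jensen disk, and the cutoff in the Stirling expansion --- to obtain the stated constants. The principal difficulty lies in the explicit bookkeeping: every Stirling remainder, every convexity estimate, and every constraint on admissible parameter ranges must be verified uniformly for all $T\ge 1$. It is precisely this uniformity that was mishandled in \cite{Tr15}, so the care exercised by Bennett \emph{et al.}\ for Dirichlet $L$-functions must be transplanted here and augmented to accommodate the interplay between $r_1$ and $r_2$ in the archimedean data of $K$, which accounts both for the $r_1/4$ term entering the main term and for the mild dependence on $n_K$ appearing in the final error.
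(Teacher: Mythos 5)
Your proposal follows essentially the same strategy as the paper: argument principle on the completed zeta function, refined Stirling/gamma bounds (yielding the $r_1/4$ shift), the Backlund auxiliary-function method with Jensen's formula on a disk near $\sigma=1$, a Rademacher/Phragm\'en--Lindel\"of convexity bound on $\zeta_K$, and numerical optimisation of the free parameters $c$, $r$, $\eta$, $\sigma_1$. The one small imprecision is that the Jensen disk must be centred at some $c$ very close to $1$ (the paper uses $c \approx 1.00001$), not at $\Re s = 2$, in order to reach the leading constant $0.22737$; otherwise the outline matches the paper's Sections 2--4.
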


In addition, writing the right of \eqref{main-bound-1} as $ C_1  \log \big(  \frac{d_K(T+2)^{n_K}}{(2\pi)^{n_K}}  \big)  +C_2  n_K  +C_3$, we have further admissible triples $(C_1,C_2,C_3)$ recorded in Table \ref{table2} in Section \ref{completingP}. Moreover, recalling that for $T\ge T_0$, $\log(T+2) -\log T \le \log (1+\frac{2}{T_0})$, from the above theorem and the triangle inequality, we derive the following improved bound for $N_K (T)$.

\begin{corollary}\label{main-coro}
Given a number field $K$ of degree $n_K$ and with absolute discriminant $d_K$, for any $T\ge 1$, we have
\begin{equation}\label{main-bound-2}
\Big| N_K (T)  - \frac{T}{\pi} \log \Big( d_K \Big( \frac{T}{2\pi e}\Big)^{n_K}\Big)\Big|
 \le  0.228 (\log d_K + n_K \log T) + 23.108  n_K  +4.520  .
\end{equation}
\end{corollary}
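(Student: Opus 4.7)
The plan is to deduce Corollary \ref{main-coro} directly from Theorem \ref{main-thm} by two elementary manipulations: separating the $r_1/4$ contribution via the triangle inequality, and replacing $\log(T+2)$ by $\log T$ on the right-hand side at the cost of a small additive correction that can be absorbed into the coefficient of $n_K$.

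First, write $X := N_K(T) - \frac{T}{\pi}\log\big(d_K(T/(2\pi e))^{n_K}\big)$ so that Theorem \ref{main-thm} bounds $|X + r_1/4|$. The triangle inequality gives $|X| \le |X + r_1/4| + r_1/4$, and the trivial estimate $r_1 \le n_K$ yields the extra contribution $r_1/4 \le 0.25\, n_K$, which I will lump into the coefficient of $n_K$.

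Next, for $T \ge 1$ the paper's remark $\log(T+2) - \log T \le \log(1 + 2/T_0)$ applies with $T_0 = 1$, giving $\log(T+2) \le \log T + \log 3$. Substituting into
\[
\log\!\Big(\tfrac{d_K(T+2)^{n_K}}{(2\pi)^{n_K}}\Big) = \log d_K + n_K\log(T+2) - n_K\log(2\pi),
\]
I obtain the upper bound $\log d_K + n_K\log T + n_K\log\!\tfrac{3}{2\pi}$. Since $\log(3/(2\pi)) < 0$, multiplying by $0.22737$ and combining with the existing $n_K$ terms only shrinks the coefficient of $n_K$ slightly (before I reabsorb the $0.25\, n_K$ from the triangle inequality).

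The remaining work is bookkeeping: assemble
\[
\big(23.02528 + 0.25 + 0.22737\log\tfrac{3}{2\pi}\big) n_K
\]
and verify that after rounding up this is at most $23.108\, n_K$, that $0.22737 \le 0.228$, and that the constant term $4.51954 \le 4.520$. The only potential pitfall is numerical, namely ensuring every rounding is in the correct direction so the inequality holds uniformly for all $T \ge 1$; there is no analytic step beyond Theorem \ref{main-thm} itself.
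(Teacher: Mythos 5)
Your proof is correct and follows the same route the paper indicates: apply the triangle inequality to shed the $r_1/4$ term (using $r_1\le n_K$), use $\log(T+2)\le\log T+\log 3$ for $T\ge 1$, and then confirm numerically that $23.02528+0.25+0.22737\log\tfrac{3}{2\pi}\approx 23.1072\le 23.108$ together with $0.22737\le 0.228$ and $4.51954\le 4.520$. This is exactly the paper's sketched derivation of the corollary from Theorem~\ref{main-thm}, just spelled out.
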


Furthermore, by Table \ref{table2}, writing the right of \eqref{main-bound-2} as $D_1(\log d_K + n_K \log T) + D_2 n_K  +D_3$, we have the following table of admissible $(D_1,D_2,D_3)$ that not only repair but also improve all  triples given in \cite[Table 2]{Tr15}.  (Note that, for all number fields $K$, our $D_2$ and $D_3$ yield a smaller vlaue of $D_2 n_K +D_3$ than the one given by Trudgian \cite{Tr15}.)

\begin{table}[htbp] 
\centering
\begin{tabular}{|c|c|c|c|c|c|c|c|c|c|    }
\hline
\multicolumn{5}{|c|}{  Trudgian \cite{Tr15} } & \multicolumn{5}{c|}{Our improvement } \\ 
 \hline
 & \multicolumn{2}{c|}{ $T\ge 1$} & \multicolumn{2}{c|}{ $T\ge 10$} & & \multicolumn{2}{c|}{ $T\ge 1$} & \multicolumn{2}{c|}{ $T\ge 10$} \\ 
 \hline
 $D_1$ & $D_2$ & $D_3$ & $D_2$ & $D_3$ & $D_1$ & $D_2$ & $D_3$ & $D_2$ & $D_3$  \\ 
 \hline
  0.247 & 8.851 & 3.024 & 8.726 &  2.081  & 0.245 & 6.735   & 4.213 & 6.449 & 3.124 \\ 
 \hline  
 0.265 & 7.521 &  3.178 & 7.396 & 2.101 & 0.264 & 5.276  & 4.082 & 4.968 & 3.051 \\ 
 \hline    
 0.282 & 6.776 & 3.335 & 6.651 & 2.123 &  0.281 & 4.478   & 4.010 & 4.149 & 3.012 \\ 
 \hline  
 0.299 & 6.262 & 3.494 & 6.138 & 2.146  & 0.296  & 3.971  & 3.969 & 3.622 & 2.990 \\ 
 \hline 
\end{tabular}
   \caption{Admissible $(D_1,D_2,D_3)$ in Corollary \ref{main-coro} and in \cite{Tr15}}\label{table1}
\end{table}

The proof of Theorem \ref{main-thm} follows closely  the arguments of  Bennett, Martin, O'Bryant, and Rechnitzer \cite{BMOR20}, Kadiri and Ng  \cite{KN12}, and Trudgian \cite{Tr15}, which are an adaption of the methods of Backlund \cite{Ba18},  McCurley  \cite{McC84}, and Rosser  \cite{Ro41}. We also take advantage of the refined estimates for Gamma factors obtained in \cite{ BMOR20}. Moreover, following the strategy of Bennett \emph{et al.} \cite{ BMOR20}, we extend  Rademacher's convexity bound for $\zeta_K(s)$ (cf. Propositions \ref{Rademacher} and \ref{convex-bd}) that, together with ``Backlund's trick'' (see Section \ref{B-trick-d}), plays a central role in improving the leading constants $C_1$ and $D_1$. Furthermore, we track all the parameters and related inequalities in a similar manner of Bennett \emph{et al.} \cite{ BMOR20} to fix the aforementioned error appearing in \cite{Tr15}. Last but not least, we note that  we obtain our results by a direct numerical computation (with help from Maple) and that it may be possible to use the ``interval analysis'' as in \cite{ BMOR20} to prove an estimate similar to \cite[Theorem 1.1]{ BMOR20}. Nonetheless, since Corollary \ref{main-coro} is already as strong as  \cite[Corollary 1.2]{ BMOR20}, and it is sufficient for most applications, we shall not devote ourselves to do such an interval analysis here.

\section{The main term and the gamma factor}\label{main}

\subsection{The main term}

Let $K$ be a number field of degree $n_K$ and with absolute discriminant $d_K$. We let $r_1$ and $r_2$ be the numbers of real and complex places, respectively, of $K$ and note that $n_K = r_1 +2r_2$. We define the completed zeta function $\xi_K(s) $ as
\begin{equation}\label{def-xi}
\xi_K(s)= s(s-1) d_K^{s/2} \g_K (s) \zeta_K (s),
\end{equation}
where 
$$
\gamma_K (s) = \Big(\pi^{-\frac{s+1}{2}} \Gamma \Big( \frac{s+1}{2}\Big) \Big)^{r_2} \Big( \pi^{-\frac{s}{2}} \Gamma \Big( \frac{s}{2}\Big)\Big)^{r_1 + r_2}.
$$
We recall that $\xi_K(s)$ extends to an entire function of order 1 and satisfies the functional equation
\begin{equation}\label{FE}
\xi_K(s) = \xi_K (1-s).
\end{equation}
As in the introduction, we set 
\[
N_K(T) = \# \{ \rho \in  \C  \mid  \zeta_K(\rho) =0,\  0  <\beta <  1, \ |\gamma| \leq T\}.
\]

To estimate $N_K(T)$, we shall apply the argument principle as follows. For any fixed $\sigma_1 >1$, we consider the rectangle $\mathcal{R}$ with vertices $\sigma_1 - iT,\  \sigma_1 + iT,\  1-  \sigma_1+ iT$, and $1- \sigma_1 - iT$ (that is away from zeros of $\xi_K (s)$).\footnote{Throughout our argument, we will always assume $T$ is away from zeros of $\xi_K(s)$. As shall be seen in Section \ref{completingP}, with this assumption, we will prove \eqref{final-est} for $T$ away from  zeros of $\xi_K(s)$. Nonetheless, if $T$ is the exact height of a zero, we know that $N_K(T)=N_K(T+\varepsilon)$ for all sufficiently small $\varepsilon>0$ (in other words, $T+\varepsilon$ is away from zeros). Then, by the triangle inequality, applying \eqref{final-est} with $T+\varepsilon$, we see that
\begin{align*}
&\Big| N_K (T)  - \frac{T}{\pi} \log \Big( d_K \Big( \frac{T}{2\pi e}\Big)^{n_K}\Big)  + \frac{r_1}{4}\Big|\\
&\le 
\Big| N_K (T+\varepsilon)  - \frac{T+\varepsilon}{\pi} \log \Big( d_K \Big( \frac{T+\varepsilon}{2\pi e}\Big)^{n_K}\Big)  + \frac{r_1}{4}\Big| 
+\Big| \frac{T+\varepsilon}{\pi} \log \Big( d_K \Big( \frac{T+\varepsilon}{2\pi e}\Big)^{n_K}\Big) -  \frac{T}{\pi} \log \Big( d_K \Big( \frac{T}{2\pi e}\Big)^{n_K}\Big) \Big| \\
&\le  C_1  \log\Big(  \frac{d_K(T+\varepsilon +2)^{n_K}}{(2\pi)^{n_K}}  \Big)  +C_2  n_K  +C_3
+\Big| \frac{T+\varepsilon}{\pi} \log \Big( d_K \Big( \frac{T+\varepsilon}{2\pi e}\Big)^{n_K}\Big) -  \frac{T}{\pi} \log \Big( d_K \Big( \frac{T}{2\pi e}\Big)^{n_K}\Big) \Big|.
\end{align*}
Now, taking $\varepsilon\rightarrow 0^+$, we conclude that \eqref{final-est} is also valid when  $T$ is the exact height of a zero.
} 
As $\xi_K (s)$ is entire,  it  follows  from the  argument principle that 
\[
N_K(T) = \frac{1}{2\pi } \Delta_{\mathcal{R}} \arg \xi_K (s).
\]
Let $\mathcal{C}$ be the part of the contour of $\mathcal{R}$ in $\Re (s ) \geq \frac{1}{2}$ and $\mathcal{C}_0$ be the part of the contour of $\mathcal{R}$ in $\Re (s ) \geq \frac{1}{2}$ and $\Im (s) \geq 0$. Since $\overline{\xi_K(s)} = \xi_K (\bar{s})$,  the functional equation \eqref{FE} then yields
\[
\Delta_{\mathcal{R}} \arg \xi_K (s) = 2 \Delta_{\mathcal{C}} \arg \xi_K (s)  =4    \Delta_{\mathcal{C}_0} \arg \xi_K (s),
\]
which implies that
\begin{align}\label{formula-N-K}
N_K(T)    = \frac{2}{\pi}   \Delta_{\mathcal{C}_0} \arg \xi_K (s).
\end{align}
Writing $B= d_K /\pi^{n_K} $, by \eqref{def-xi}, we have
\begin{align}\label{deta-expansion}
 \begin{split}
\Delta_{\mathcal{C}_0} \arg \xi_K(s) 
&= \Delta_{\mathcal{C}_0} \arg s + \Delta_{\mathcal{C}_0} \arg B^{s/2} \\
&+(r_1 + r_2)  \Delta_{\mathcal{C}_0} \arg \Gamma  \left(\frac{s}{2} \right)  +  r_2 \Delta_{\mathcal{C}_0} \arg \Gamma \left(\frac{s+1}{2} \right) + \Delta_{\mathcal{C}_0} \arg \left( (s-1) \zeta_K (s)\right).
 \end{split}
\end{align}
It is clear that
\begin{align}   \label{deta-explicit}
 \begin{split}
&\Delta_{\mathcal{C}_0} \arg s = \arctan (2T),\\
&\Delta_{\mathcal{C}_0} \arg B^{s/2} = \frac{T}{2} \log B = \frac{T}{2} \log \Big( \frac{d_K}{\pi^{n_K} }\Big),\\
&\Delta_{\mathcal{C}_0}  \arg \Gamma (s) = \Delta_{\mathcal{C}_0}  (\Im \log \Gamma (s)) = \Im \log \Gamma \Big(\frac{1}{2} + iT \Big) .
 \end{split}
\end{align}

  To control the Gamma factor, we shall appeal for the improved numerical bound established in \cite[Sec. 3]{BMOR20}. For $a\in\{0,1\}$, we set
$$
g_a(T)= \frac{2}{\pi}\Im \log \Gamma \Big(\frac{1}{4} +\frac{a}{2} + i\frac{T}{2} \Big) - \frac{T}{\pi} \log \Big( \frac{T}{2e}\Big) -\frac{2a -1}{4}.
$$
It follows from  \cite[Proposition 3.2]{BMOR20} that for $a\in\{0,1\}$ and $T\ge 5/7$, 
$$
|g_a(T)|\le \frac{2-a}{50T}.
$$
Hence, setting
\begin{align}\label{equ:g-K}
g_K(T) = (r_1 +r_2 )g_0(T) +  r_2 g_1(T), 
\end{align}
we then obtain
\begin{align}\label{gK-bd}
|g_K(T) | \le \frac{2 n_K}{50T} -\frac{r_2}{50T}.
\end{align}

Now, gathering  \eqref{formula-N-K}, \eqref{deta-expansion}, \eqref{deta-explicit}, and \eqref{equ:g-K}, we obtain
\begin{equation}\label{bd-N-K-1}
N_K (T) = \frac{2}{\pi}\arctan ( 2T) + g_K(T) +  \frac{T}{\pi} \log \Big( d_K \Big( \frac{T}{2\pi e}\Big) ^{n_K} \Big)- \frac{r_1}{4} 
+ \frac{2}{\pi}\Delta_{\mathcal{C}_0} \arg ( (s-1) \zeta_K (s) ).
\end{equation}
Let $\mathcal{C}_1$ denote the vertical line from $\sigma_1$ to $\sigma_1 + iT$ and $\mathcal{C}_2$  denote the horizontal line from $\sigma_1 + iT$ to $\frac{1}{2} + iT$. We require the following two estimates.

\begin{lemma} \label{bd-zeta-K}
For $s=\sigma+it$ with $\sigma>1$, one has
\begin{align*}
\frac{\zeta_K (2 \sigma)}{\zeta_K (\sigma)}  \leq |\zeta_K (s)| \leq \zeta (\sigma)^{n_K},
\end{align*}
where, as later, $\zeta(s)$ denotes the Riemann zeta function.
\end{lemma}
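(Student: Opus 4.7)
The plan is to work entirely from the Euler product
\[
\zeta_K(s) = \prod_{\mathfrak{p}} \bigl(1 - N(\mathfrak{p})^{-s}\bigr)^{-1},
\]
which converges absolutely for $\Re(s)>1$ and hence is nonvanishing there. Both inequalities come out of termwise estimates on the factors $1 - N(\mathfrak{p})^{-s}$ and a repackaging of the resulting products as standard zeta values.

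For the upper bound, first I would apply the triangle inequality to each Euler factor via $|1 - N(\mathfrak{p})^{-s}| \ge 1 - N(\mathfrak{p})^{-\sigma}$ to obtain $|\zeta_K(s)| \le \zeta_K(\sigma)$. Next, I would group the Euler factors over each rational prime $p$: writing $p\mathcal{O}_K = \prod_i \mathfrak{p}_i^{e_i}$ with $N(\mathfrak{p}_i)=p^{f_i}$ and $\sum_i e_i f_i = n_K$, the contribution of $p$ to $\zeta_K(\sigma)$ is $\prod_i (1-p^{-f_i\sigma})^{-1}$. Since $f_i\ge 1$ and $\sigma>1$ force $(1-p^{-f_i\sigma})^{-1}\le (1-p^{-\sigma})^{-1}$, and since the number of primes above $p$ is bounded by $n_K$, I would multiply these estimates across $p$ to conclude $\zeta_K(\sigma)\le \zeta(\sigma)^{n_K}$, giving the right-hand inequality.

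For the lower bound, I would instead invoke $|1 - N(\mathfrak{p})^{-s}| \le 1 + N(\mathfrak{p})^{-\sigma}$ on each Euler factor, which yields
\[
|\zeta_K(s)| \ge \prod_{\mathfrak{p}} \bigl(1 + N(\mathfrak{p})^{-\sigma}\bigr)^{-1}.
\]
Then I would use the identity $(1+x)^{-1} = (1-x)/(1-x^2)$ with $x = N(\mathfrak{p})^{-\sigma}$ to rewrite the right-hand side as
\[
\prod_{\mathfrak{p}} \frac{1 - N(\mathfrak{p})^{-\sigma}}{1 - N(\mathfrak{p})^{-2\sigma}} = \frac{\zeta_K(2\sigma)}{\zeta_K(\sigma)},
\]
via the Euler product, which finishes the left-hand inequality.

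The argument is entirely elementary and I do not anticipate any genuine obstacle; the only mildly subtle point is the bookkeeping with the splitting data $(e_i,f_i)$ in the upper bound, which is what refines the naive estimate $|\zeta_K(s)|\le \zeta_K(\sigma)$ into the sharper $\zeta(\sigma)^{n_K}$ needed later in the paper.
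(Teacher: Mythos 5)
Your proof is correct. The paper states this lemma without proof (it is a standard fact, essentially as in Rademacher \cite{Ra59}), and your Euler-product argument is precisely the standard one: the triangle inequality on each factor gives $|\zeta_K(s)|\le\zeta_K(\sigma)$, the splitting data $\sum_i e_i f_i=n_K$ forces at most $n_K$ primes above each $p$ with residue degree $\ge 1$, giving $\zeta_K(\sigma)\le\zeta(\sigma)^{n_K}$, and the reverse triangle inequality combined with $(1+x)^{-1}=(1-x)/(1-x^2)$ produces the lower bound $\zeta_K(2\sigma)/\zeta_K(\sigma)$. All steps check out.
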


\begin{lemma} \label{lm-Delta-C1}
For $\sigma_1 > 1$,
\[
|\Delta_{\mathcal{C}_1} \arg (s-1)\zeta_K (s)| \leq \frac{\pi}{2} + n_K \log \zeta(\sigma_1).
\]
\end{lemma}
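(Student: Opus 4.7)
The plan is to split the change of argument multiplicatively: write
\[
\Delta_{\mathcal{C}_1} \arg (s-1)\zeta_K(s) \;=\; \Delta_{\mathcal{C}_1}\arg(s-1) \;+\; \Delta_{\mathcal{C}_1}\arg\zeta_K(s),
\]
and bound each term separately. This is a standard Backlund-type reduction that isolates the transcendental part of the argument variation from the elementary one.

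For the first piece, I would observe that along $\mathcal{C}_1$ the point $s-1$ traces the vertical segment from $\sigma_1-1>0$ to $\sigma_1-1+iT$, which lies entirely in the open right half-plane. Hence the principal argument varies from $0$ to $\arctan(T/(\sigma_1-1))$, giving
\[
|\Delta_{\mathcal{C}_1}\arg(s-1)| \;=\; \arctan\!\Big(\frac{T}{\sigma_1-1}\Big) \;<\; \frac{\pi}{2}.
\]

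For the second piece, since $\sigma_1>1$ the Euler product for $\zeta_K(s)$ converges absolutely on $\mathcal{C}_1$, so $\zeta_K$ is nonvanishing there and one has the convergent series
\[
\log\zeta_K(s) \;=\; \sum_{\mathfrak{p}}\sum_{k\ge 1} \frac{1}{k\,\N(\mathfrak{p})^{ks}}.
\]
This series defines a continuous branch of $\log\zeta_K$ on $\mathcal{C}_1$ that is real (and equals $\log\zeta_K(\sigma_1)$) at the bottom endpoint. Consequently $\arg\zeta_K(\sigma_1)=0$ and, at any $s=\sigma_1+it$ on $\mathcal{C}_1$,
\[
|\arg\zeta_K(s)| \;=\; |\Im\log\zeta_K(s)| \;\le\; |\log\zeta_K(s)| \;\le\; \sum_{\mathfrak{p}}\sum_{k\ge 1}\frac{1}{k\,\N(\mathfrak{p})^{k\sigma_1}} \;=\; \log\zeta_K(\sigma_1).
\]
Applying the upper bound $\zeta_K(\sigma_1)\le \zeta(\sigma_1)^{n_K}$ from Lemma \ref{bd-zeta-K} gives $|\arg\zeta_K(s)|\le n_K\log\zeta(\sigma_1)$ uniformly on $\mathcal{C}_1$, and thus
\[
|\Delta_{\mathcal{C}_1}\arg\zeta_K(s)| \;\le\; n_K\log\zeta(\sigma_1).
\]

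Adding the two bounds yields the claimed estimate. There is no real obstacle here; the only subtle point is making sure that one works with the branch of $\log\zeta_K$ supplied by the Dirichlet series (so that the argument is anchored at $0$ when $s=\sigma_1\in\mathbb{R}$), after which the triangle inequality $|\Im\log|\le|\log|$ does the rest.
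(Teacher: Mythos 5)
Your proof is correct and follows essentially the same route as the paper: split $\Delta_{\mathcal{C}_1}\arg$ over the two factors, evaluate $\Delta_{\mathcal{C}_1}\arg(s-1)=\arctan(T/(\sigma_1-1))<\pi/2$ directly, and bound $|\arg\zeta_K(\sigma_1+iT)|\le|\log\zeta_K(\sigma_1+iT)|\le\log\zeta_K(\sigma_1)\le n_K\log\zeta(\sigma_1)$ via Lemma \ref{bd-zeta-K}. You supply slightly more detail than the paper (noting the Dirichlet-series branch of $\log\zeta_K$ that anchors the argument at $0$ on the real axis), but the decomposition and the chain of inequalities are identical.
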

\begin{proof}
Note that
\begin{align*}
\Delta_{\mathcal{C}_1} \arg (s-1)\zeta_K (s)
 &=\Delta_{\mathcal{C}_1}  \arg (s-1) +\Delta_{\mathcal{C}_1}  \arg \zeta_K (s) 
= \arctan \Big( \frac{T}{\sigma_1 -1}\Big) +\Delta_{\mathcal{C}_1} \arg \zeta_K (s).
\end{align*}
Now, the lemma follows from the estimate
\[
|\Delta_{\mathcal{C}_1}  \arg  \zeta_K (s)|
 =  |\arg \zeta_K (\sigma_1+iT) |
  \leq |\log \zeta_K (\sigma_1+iT) |
  \le  \log \zeta_K (\sigma_1)  \leq n_K \log \zeta(\sigma_1),
\]
where the last inequality is due to Lemma \ref{bd-zeta-K}.
\end{proof}
Thus, by Lemma \ref{lm-Delta-C1} and \eqref{bd-N-K-1}, we arrive at 
\begin{equation}\label{main-est}
 \Big| N_K (T)  -  \frac{T}{\pi} \log \Big( d_K\Big( \frac{T}{2\pi e}\Big) ^{n_K} \Big) +  \frac{r_1}{4} \Big|  \leq  
 2 + |g_K(T)|+ \frac{2n_K}{\pi} \log \zeta(\sigma_1) +  \frac{2}{\pi}| \Delta_{\mathcal{C}_2}  \arg ( (s-1) \zeta_K (s) )|.
\end{equation}

\subsection{Bounding the Gamma factor}

For $a\in\{0,1\}$, $0\le d < 9/2$ and $T\ge 5/7$, we set
$$
\mathcal{E}_a(T,d)=  \Big|  \Im \log \Gamma \Big( \frac{\sigma +a+iT}{2} \Big)\Big|_{\sigma=\frac{1}{2}}^{\frac{1}{2}+d} 
+  \Im \log \Gamma \Big( \frac{\sigma +a+iT}{2} \Big)\Big|_{\sigma=\frac{1}{2}}^{\frac{1}{2}-d} \Big|,
$$
and we define
\begin{equation}\label{def-EK-1}
\mathcal{E}_K(T,d)=  (r_1+r_2)\mathcal{E}_0(T,d) +  r_2 \mathcal{E}_1(T,d). 
\end{equation}
Following \cite[p. 1463]{BMOR20}, we let
\begin{align*}
 \begin{split}
E_a (T,d)& = \frac{2T /3}{(2a+2d + 17)^2 + 4 T^2} + \frac{2T/3}{(2a -2d +17)^2 + 4T^2}  - \frac{4T/3}{(2a+17)^2 + 4T^2}\\
& + \frac{T}{2} \log \Big( 1 + \frac{(2a + 17)^2}{4T^2}\Big) - \frac{T}{4} \log \Big( 1 + \frac{(2a + 2d +17)^2}{4T^2}\Big) - \frac{T}{4} \log \Big( 1 + \frac{(2a -2d + 17)^2}{4T^2}\Big)\\
&+ \frac{(8+6\pi)/45}{((2a + 2d + 17)^2 + 4T^2)^{3/2}}
 +  \frac{(8+6\pi)/45}{((2a - 2d + 17)^2 + 4T^2)^{3/2}} + \frac{2(8+ 6\pi)/45}{((2a + 17)^2 + 4T^2)^{3/2}} \\
& + \sum_{k=0}^3\Big( 2\arctan \frac{2a+1+4k}{2T} - \arctan \frac{2a+2d+1+4k}{2T} - \arctan \frac{2a-2d+1+4k}{2T} \Big)\\  
& + \frac{2a + 2d +15}{4} \arctan \frac{2a + 2d + 17}{2T}  + \frac{2a - 2d +15}{4} \arctan \frac{2a -2d + 17}{2T}\\
& - \frac{2a + 15}{2} \arctan\frac{2a+17}{2T}.
 \end{split} 
\end{align*}
We shall further set
\begin{equation}\label{def-EK-2}
E_K(T,d) = (r_1+r_2)E_0(T,d) +  r_2 E_1(T,d). 
\end{equation}
As shown in \cite[p. 1462]{BMOR20}, $\mathcal{E}_a(T,d) \le E_a (T,d)$ for $0 \leq d < 9/2$ and  $T\ge 5/7$, and thus
\begin{equation}\label{EK-bd}
\mathcal{E}_K(T,d) \le E_K (T,d)
\end{equation}
for $0 \leq d < 9/2$ and  $T\ge 5/7$. In addition,  from \cite[Lemma 3.4]{BMOR20} and our definition of $E_K(T,d)$, we have the following lemma.

\begin{lemma}\label{EK-bd-final}
For $0\le \delta_1 \le d < 9/2$ and  $T\ge 5/7$, 
$$
0<E_K (T,\delta_1)\le E_K(T,d).
$$
Furthermore, for $d\in [\frac{1}{4}, \frac{5}{8}] $ and $T\ge 5/7$,
$$
\frac{E_K (T,d)}{\pi} \le (r_1+ r_2)\frac{640d-112}{1536(3T-1)} +r_2\frac{(640+216)d-112-39}{1536(3T+3-1)} + \frac{n_K}{2^{10}}.
$$
\end{lemma}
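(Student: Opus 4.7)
The plan is to reduce Lemma \ref{EK-bd-final} to the corresponding one-variable statements for $E_0(T,d)$ and $E_1(T,d)$ provided by \cite[Lemma 3.4]{BMOR20}, and then assemble them via the definition \eqref{def-EK-2}, namely $E_K(T,d) = (r_1+r_2)E_0(T,d) + r_2 E_1(T,d)$. Since $r_1+r_2$ and $r_2$ are non-negative integers (not both zero, as $n_K = r_1+2r_2 \ge 1$), both claims transfer directly from the componentwise statements by linearity.

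For the first assertion, I would invoke the fact, established in \cite[Lemma 3.4]{BMOR20}, that for each $a \in \{0,1\}$ and $T \ge 5/7$, the function $d \mapsto E_a(T,d)$ is strictly positive on $(0, 9/2)$, vanishes at $d=0$, and is non-decreasing on $[0, 9/2)$. Taking the non-negative linear combination $(r_1+r_2) E_0 + r_2 E_1$ preserves monotonicity and positivity, which yields $0 < E_K(T,\delta_1) \le E_K(T,d)$ for $0 \le \delta_1 \le d < 9/2$ (positivity uses that at least one of $r_1+r_2, r_2$ is nonzero, which is automatic).

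For the explicit upper bound, \cite[Lemma 3.4]{BMOR20} supplies, for $d \in [\tfrac14, \tfrac58]$ and $T \ge 5/7$, the numerical estimates
\begin{equation*}
\frac{E_0(T,d)}{\pi} \le \frac{640d-112}{1536(3T-1)} + \frac{1}{2^{10}}, \qquad \frac{E_1(T,d)}{\pi} \le \frac{856d - 151}{1536(3T+2)} + \frac{1}{2^{10}},
\end{equation*}
where one notes that $640+216 = 856$, $112+39 = 151$, and $3T+3-1 = 3T+2$, matching the coefficients in the target inequality. Multiplying the first bound by $r_1+r_2$, the second by $r_2$, adding, and using $(r_1+r_2) + r_2 = r_1 + 2r_2 = n_K$ to collect the constant contributions $\tfrac{1}{2^{10}}$ delivers the claimed bound.

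The substantive work underlying this lemma is really the monotonicity and the explicit bounds for $E_a(T,d)$, which are carried out in \cite[Sec. 3]{BMOR20} by term-by-term differentiation of the defining expression and careful sign analysis on the prescribed ranges of $T$ and $d$. In the present paper the role of Lemma \ref{EK-bd-final} is purely that of a bookkeeping device that packages those results in the form required for the two-parameter combination $(r_1+r_2, r_2)$ arising from the gamma factor of $\zeta_K(s)$; so I do not anticipate any serious obstacle beyond verifying that $r_1+r_2, r_2 \ge 0$ so as to preserve the inequalities under linear combination.
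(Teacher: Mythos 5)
Your approach is the same one the paper uses: the paper itself says only ``from \cite[Lemma 3.4]{BMOR20} and our definition of $E_K(T,d)$, we have the following lemma,'' so the intended proof is exactly the bookkeeping you describe --- cite the one-variable monotonicity, positivity, and explicit bounds for $E_0$ and $E_1$, and transport them through the non-negative linear combination $E_K = (r_1+r_2)E_0 + r_2 E_1$, using $(r_1+r_2)+r_2 = n_K$ for the $2^{-10}$ term. Your verification of the coefficients ($640+216=856$, $112+39=151$, $3T+3-1=3T+2$) is correct, as is the observation that $r_1+r_2 \ge 1$ always holds.

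However, there is one substantive error in your recollection of \cite[Lemma 3.4]{BMOR20}: you assert that $E_a(T,\cdot)$ ``vanishes at $d=0$.'' If that were so, then at $\delta_1 = 0$ you would get $E_K(T,0) = 0$, which contradicts the strict inequality $0 < E_K(T,\delta_1)$ that the lemma claims for all $\delta_1 \ge 0$. In fact $E_a(T,0) > 0$: setting $d=0$ in the defining expression, the first line, the logarithmic line, the $\arctan$ sum over $k$, and the last two lines all cancel, but the three terms with coefficient $(8+6\pi)/45$ do not --- they all carry plus signs and collapse to $\dfrac{4(8+6\pi)/45}{\bigl((2a+17)^2+4T^2\bigr)^{3/2}} > 0$. (Note this does not contradict $\mathcal{E}_a(T,0)=0$, since $E_a$ is only an upper bound for $\mathcal{E}_a$ and need not be sharp at $d=0$.) The statement of \cite[Lemma 3.4]{BMOR20} is $0 < E_a(T,\delta) \le E_a(T,d)$ for $0 \le \delta \le d < 9/2$, $T \ge 5/7$, which already includes positivity at $\delta=0$. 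With that correction your argument goes through unchanged and matches the paper's.
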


\section{Backlund's trick and the Jensen integral}\label{B-trick}

\subsection{Introducing the auxiliary function $f_N$}

For the sake of convenience, we shall set $\mathcal{Z}(w)=(w-1)\zeta_K(w)$. In order to analyse the variation of the argument of $\mathcal{Z}(w)$ on $\mathcal{C}_2$, we shall introduce an auxiliary function
\begin{align*}
f_N(s)=\frac{1}{2}\Big(\mathcal{Z}(s+iT)^N+\mathcal{Z}(s-iT)^N\Big)
\end{align*}
for  $N\in\Bbb{N}$. For $\sigma\in\R$, it is clear that 
\begin{align*}
f_N(\sigma)=\frac{1}{2}\Big(\mathcal{Z}(\sigma+iT)^N+\mathcal{Z}(\sigma-iT)^N\Big)=\frac{1}{2}\Big(\mathcal{Z}(\sigma+iT)^N+\overline{\mathcal{Z}(\sigma+iT)^N}\Big)
=\mathfrak{Re}(\mathcal{Z}(\sigma+iT)^N).
\end{align*}
We need the following definition that measures the variation of the argument of $\mathcal{Z}(w)^N$ on $\mathcal{C}_2$.

\begin{defn}\label{defn}
Let $b_N$ denote the non-negative integer, depending on $N$, such that
\begin{align*}
b_N\leq\frac{1}{\pi}\Big|\Delta_{\mathcal{C}_2} \arg\mathcal{Z}(w)^N\Big|<b_N+1.
\end{align*}
\end{defn}

From this definition and the fact that $\arg\mathcal{Z}(w)^N=N\arg\mathcal{Z}(w)$, we immediately obtain
\begin{align}
\frac{b_N}{N}
\leq\frac{1}{\pi}\Big|\Delta_{\mathcal{C}_2} \arg\mathcal{Z}(w)\Big|
<\frac{b_N+1}{N}.
\label{equ:bd-n over N}
\end{align}
In addition, we have the following lemma concerning the zeros of  $f_N(\sigma)$.

\begin{lemma}\label{zeros-f}
In the notation of Definition \ref{defn}, the function $f_N(\sigma)$ has at least $b_N$ zeros in  $[\frac{1}{2},\sigma_1]$.
\end{lemma}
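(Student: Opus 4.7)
The plan is to exploit the identity $f_N(\sigma)=\Re(\mathcal{Z}(\sigma+iT)^N)$ for real $\sigma$ noted just above: zeros of $f_N$ on $[\tfrac{1}{2},\sigma_1]$ correspond exactly to those $\sigma$ at which any continuous lift of $\arg\mathcal{Z}(\sigma+iT)^N$ takes a value in $\tfrac{\pi}{2}+\pi\Z$. I would first invoke the running assumption, recorded in the footnote in Section \ref{main}, that $T$ is not the ordinate of a zero of $\xi_K$. Since $\mathcal{Z}(w)=(w-1)\zeta_K(w)$ has no pole on $\mathcal{C}_2$, this assumption guarantees $\mathcal{Z}(\sigma+iT)^N\ne 0$ for all $\sigma\in[\tfrac{1}{2},\sigma_1]$, so I may fix a continuous branch $\theta(\sigma)$ of $\arg\mathcal{Z}(\sigma+iT)^N$ on this interval. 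Because $\mathcal{C}_2$ is parametrised by $\sigma$ running from $\sigma_1$ down to $\tfrac{1}{2}$, Definition \ref{defn} then gives at once
\[
|\theta(\tfrac{1}{2})-\theta(\sigma_1)| = \bigl|\Delta_{\mathcal{C}_2}\arg \mathcal{Z}(w)^N\bigr| \ge b_N\pi.
\]

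Next, since $\theta$ is continuous on the compact connected set $[\tfrac{1}{2},\sigma_1]$, its image is a closed real interval containing both $\theta(\sigma_1)$ and $\theta(\tfrac{1}{2})$, and hence contains the closed subinterval between them, whose length is at least $b_N\pi$. An elementary count shows that any closed real interval of length $L$ meets the arithmetic progression $\tfrac{\pi}{2}+\pi\Z$ in at least $\lfloor L/\pi\rfloor$ points, so the image of $\theta$ contains at least $b_N$ such grid points. For each grid point $\tfrac{\pi}{2}+k\pi$ in the image the intermediate value theorem supplies some $\sigma_k\in[\tfrac{1}{2},\sigma_1]$ with $\theta(\sigma_k)=\tfrac{\pi}{2}+k\pi$, and distinct values of $k$ force distinct $\sigma_k$ because $\theta$ is a function. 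At each such $\sigma_k$ the quantity $\mathcal{Z}(\sigma_k+iT)^N$ is purely imaginary, so $f_N(\sigma_k)=0$, yielding at least $b_N$ zeros of $f_N$ on $[\tfrac{1}{2},\sigma_1]$.

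I do not anticipate any serious obstacle. The two points that deserve a small amount of care are the nonvanishing of $\mathcal{Z}$ on $\mathcal{C}_2$, which is handled by the convention on $T$ recorded in the footnote, and the counting lemma for the grid $\tfrac{\pi}{2}+\pi\Z$, where the closedness hypothesis is essential (an open interval of length exactly $b_N\pi$ can supply only $b_N-1$ such points, so one cannot afford to discard the endpoints of the image of $\theta$). Both checks are routine and require no new numerical input.
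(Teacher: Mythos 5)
Your proof is correct and follows essentially the same approach as the paper: translate the bound on $\big|\Delta_{\mathcal{C}_2}\arg\mathcal{Z}(w)^N\big|$ into at least $b_N$ crossings of the grid $\tfrac{\pi}{2}+\pi\Z$ via the intermediate value theorem, and observe that each crossing gives a $\sigma$ where $\mathcal{Z}(\sigma+iT)^N$ is purely imaginary, hence $f_N(\sigma)=\Re\bigl(\mathcal{Z}(\sigma+iT)^N\bigr)=0$. You merely spell out the two small points the paper leaves implicit, namely the nonvanishing of $\mathcal{Z}$ on $\mathcal{C}_2$ (so a continuous branch of the argument exists) and the exact grid-counting argument.
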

\begin{proof}
By Definition \ref{defn}, there are at least $b_N$ different values of $\sigma$ such that
$\frac{1}{2}+\frac{1}{\pi}\arg\mathcal{Z}(\sigma+iT)^N \in\Bbb{Z}$. Thus, for such values of $\sigma$, 
$\mathcal{Z}(\sigma+iT)^N$ is purely imaginary, which means that
\begin{align*}
f_N(\sigma)=\mathfrak{Re}(\mathcal{Z}(\sigma+iT)^N)=0
\end{align*}
for at least $b_N$ different values $\sigma$.
\end{proof}

We shall also require the following lemma regarding the limiting behaviour of $f_N$.

\begin{lemma}\label{Lem-4.3}
For any $c>1$, there is an infinite sequence of natural numbers $(N_m)_{m=1}^{\infty}$ such that $f_{N_m}(c)\neq 0$.  Moreover, we have
\begin{align*}
\limsup_{m\to\infty}\Big(-\frac{1}{N_m}\log |f_{N_m}(c)|\Big)
\leq\log\Big(\frac{1}{\sqrt{(c-1)^2+T^2}}\frac{\zeta_K(c)}{\zeta_K(2c)}\Big) .
\end{align*}
\end{lemma}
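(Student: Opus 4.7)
The plan is to reduce $f_N(c)$ to a simple trigonometric expression. Since $\zeta_K$ has real Dirichlet coefficients, $\mathcal{Z}(c-iT)=\overline{\mathcal{Z}(c+iT)}$, and writing $\mathcal{Z}(c+iT)=Re^{i\phi}$ with $R=\sqrt{(c-1)^2+T^2}\,|\zeta_K(c+iT)|$ and $\phi=\arg\mathcal{Z}(c+iT)$, we immediately get
\[
f_N(c)=\tfrac{1}{2}\bigl(R^N e^{iN\phi}+R^N e^{-iN\phi}\bigr)=R^N\cos(N\phi).
\]
Thus $-\frac{1}{N}\log|f_N(c)|=-\log R-\frac{1}{N}\log|\cos(N\phi)|$, which reduces everything to producing $N_m$ along which the cosine factor does not decay too fast.

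Next, I would exhibit an infinite sequence $(N_m)$ with $|\cos(N_m\phi)|$ bounded away from $0$ by a fixed constant $\epsilon>0$. Split into two cases. If $\phi/\pi$ is irrational, Weyl's equidistribution theorem shows that $\{N\phi/(2\pi)\}$ is dense (in fact equidistributed) in $[0,1)$, so one can pick infinitely many $N_m$ with $|\cos(N_m\phi)|\ge \frac12$. If $\phi/\pi=p/q\in\mathbb{Q}$, then $N\phi\bmod 2\pi$ takes only finitely many values as $N$ runs over $\mathbb{N}$; the subset where $\cos(N\phi)=0$ forms either the empty set or an arithmetic progression, and on its complement (which is infinite) $|\cos(N\phi)|$ attains only finitely many positive values, hence is bounded below by some $\epsilon>0$. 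Either way, the desired sequence exists and $f_{N_m}(c)\ne 0$ along it.

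Finally, for this sequence, $|f_{N_m}(c)|\ge\epsilon R^{N_m}$, so $-\frac{1}{N_m}\log|f_{N_m}(c)|\le -\log R+\frac{1}{N_m}\log(1/\epsilon)$, and letting $m\to\infty$ yields
\[
\limsup_{m\to\infty}\Big(-\tfrac{1}{N_m}\log|f_{N_m}(c)|\Big)\le -\log R.
\]
The lower bound on $R$ follows directly from Lemma~\ref{bd-zeta-K} applied at $\sigma=c>1$, which gives $|\zeta_K(c+iT)|\ge \zeta_K(2c)/\zeta_K(c)$, and hence
\[
-\log R\le \log\Bigl(\tfrac{1}{\sqrt{(c-1)^2+T^2}}\cdot\tfrac{\zeta_K(c)}{\zeta_K(2c)}\Bigr),
\]
which is precisely the stated bound.

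The only real obstacle is the case analysis for $\phi/\pi$ rational: one must verify that even in the periodic regime there are infinitely many $N$ with $\cos(N\phi)\ne 0$ (equivalently, that the arithmetic progression of bad $N$ does not exhaust $\mathbb{N}$, which it never does). Once this is handled, the two cases merge into a uniform statement and the $\limsup$ calculation is essentially immediate. Note that the argument is robust with respect to the choice of $\epsilon$, so we do not need any quantitative Diophantine information about $\phi/\pi$.
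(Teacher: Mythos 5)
Your proof is correct and reaches the same conclusion, but by a genuinely different route. You first simplify $f_N(c)=R^N\cos(N\phi)$ (a clean trigonometric reduction that the paper does not write out), and then split into cases according to whether $\phi/\pi$ is rational or irrational, invoking Weyl equidistribution in the irrational case and a finite-orbit argument in the rational one; the upshot is a sequence $N_m$ along which $|\cos(N_m\phi)|\ge\epsilon>0$ for a fixed $\epsilon$, which is all you need since $\frac{1}{N_m}\log(1/\epsilon)\to 0$. The paper instead writes $\frac{f_N(c)}{\mathcal{Z}(c+iT)^{N}}=\frac12\bigl(1+e^{-2N\phi i}\bigr)$ and applies Dirichlet's approximation theorem uniformly (no case split) to produce $N_m$ with $-2N_m\phi\to 0\pmod{2\pi}$, so that the ratio $\to 1$; this is a slightly stronger statement than bounding $|\cos|$ away from zero, but it is used for exactly the same purpose. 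Both arguments then finish identically, lower-bounding $R=|\mathcal{Z}(c+iT)|$ via Lemma~\ref{bd-zeta-K}. Your version is more explicit in its reduction and makes transparent exactly what is required of the sequence ($|\cos(N_m\phi)|$ bounded below, not $\to 1$); the paper's version avoids the rational/irrational dichotomy at the cost of invoking the slightly heavier Dirichlet approximation machinery.
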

\begin{proof}
Write  $\mathcal{Z}(c+iT)=Re^{i\phi}$ for some $R,\phi\in\Bbb{R}$. It is clear that $\mathcal{Z}(c-iT)=Re^{-i\phi}$. Also, as $\mathcal{Z}(c+iT)\neq 0$ for any $c>1$, we know that $R>0$. Thus, we have
\begin{align*}
\frac{f_N(c)}{\mathcal{Z}(c+iT)^{N}}=\frac{1}{2}\Big(1+\frac{\mathcal{Z}(c-iT)^N}{\mathcal{Z}(c+iT)^N}\Big)=\frac{1}{2}(1+e^{-2N\phi i})
\end{align*}
for any $N\in\Bbb{N}$.

Now, applying Dirichlet's approximation theorem, for any $\phi$, there is an infinite sequence of natural numbers $(N_m)_{m=1}^{\infty}$  such that as $m\rightarrow \infty$, $-2N_m\phi\rightarrow 0$ modulo $2\pi$ and $N_m \rightarrow \infty$. Thus,  $\frac{f_{N_m}(c)}{\mathcal{Z}(c+iT)^{N_m}}\to 1$ as $m\rightarrow \infty$, and hence
$$
\lim_{m\rightarrow \infty}\Big(-\frac{1}{N_m} (\log |f_{N_m}(c)| -N_m \log |\mathcal{Z}(c+iT)|) \Big)
= \Big(\lim_{m\rightarrow \infty}  \frac{-1}{N_m}\Big) \Big( \lim_{m\rightarrow \infty} \log \Big|\frac{f_{N_m}(c)}{\mathcal{Z}(c+iT)^{N_m}} \Big| \Big)=0.
$$
Moreover, by the left inequality of Lemma \ref{bd-zeta-K}, we have
\begin{align*}
|\mathcal{Z}(c+iT)|\geq\sqrt{(c-1)^2+T^2}\frac{\zeta_K(2c)}{\zeta_K(c)},
\end{align*}
which, combined with the above identity, gives
\begin{align*}
0&\ge \limsup_{m\to\infty} \Big( -\frac{1}{N_m}\log |f_{N_m}(c)|+ \log\Big(\sqrt{(c-1)^2+T^2}\frac{\zeta_K(2c)}{\zeta_K(c)}\Big)  \Big) \\
&=  \limsup_{m\to\infty} \Big( -\frac{1}{N_m}\log |f_{N_m}(c)| \Big) + \log\Big(\sqrt{(c-1)^2+T^2}\frac{\zeta_K(2c)}{\zeta_K(c)}\Big) .
\end{align*}
Herein, we complete the proof.
\end{proof}

Let $D(c,r)$ be the open disk centred at $c$ with radius $r$. Let $(N_m)_{m=1}^{\infty}$  be given as in Lemma \ref{Lem-4.3}. For any $N\in  (N_m)_{m=1}^{\infty}$, we set
\begin{align*}
S_N(c,r)=\frac{1}{N}\sum_{z\in\mathcal{S}_N(D(c,r) )  }\log\frac{r}{|z-c|},
\end{align*}
where $\mathcal{S}_N(D(c,r) )$ denotes the set of  zeros of $f_N(s)$ in $D(c,r)$. As in \cite[Theorem 5.1]{BMOR20}, we have the following version of  Jensen's formula.

\begin{theorem}[Jensen's formula]
For $c\in\mathbb{C}$ and $r>0$, if $f_N(c)\neq0$, then
\begin{align*}
S_N(c,r)=-\frac{1}{N}\log|f_N(c)|+\frac{1}{2\pi}\int_{-\pi}^{\pi}\frac{1}{N}\log|f_N(c+re^{i\theta})|d\theta .
\end{align*}
\end{theorem}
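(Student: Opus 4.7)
The plan is to deduce this statement as a direct consequence of the classical Jensen formula, applied to the entire function $f_N$. Recall that the classical version (see, e.g., \cite[Theorem 5.1]{BMOR20} or any standard complex-analysis text) asserts that if $g$ is holomorphic on a neighbourhood of the closed disk $\overline{D(c,r)}$, $g(c)\neq 0$, and $z_1,\dots,z_k$ denote the zeros of $g$ in $D(c,r)$ (with multiplicity), then
\begin{equation*}
\sum_{j=1}^{k}\log\frac{r}{|z_j-c|}
=-\log|g(c)|+\frac{1}{2\pi}\int_{-\pi}^{\pi}\log|g(c+re^{i\theta})|\,d\theta.
\end{equation*}
Thus the entire task reduces to checking that $f_N$ meets the hypotheses on every closed disk $\overline{D(c,r)}$ with $f_N(c)\neq 0$, and then dividing the identity by $N$.

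First I would observe that $\mathcal{Z}(w)=(w-1)\zeta_K(w)$ is \emph{entire} on $\Bbb{C}$: the only singularity of $\zeta_K$ is the simple pole at $w=1$, and the factor $w-1$ cancels it exactly. Consequently, for any fixed $T$ and $N\in\Bbb{N}$, the map
\begin{equation*}
s\longmapsto f_N(s)=\tfrac{1}{2}\bigl(\mathcal{Z}(s+iT)^N+\mathcal{Z}(s-iT)^N\bigr)
\end{equation*}
is entire as well, being a sum of $N$th powers of translates of an entire function. In particular $f_N$ is holomorphic on every closed disk $\overline{D(c,r)}\subset\Bbb{C}$, so the classical Jensen formula is available.

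Next, under the assumption $f_N(c)\neq 0$, I would apply the displayed formula with $g=f_N$. The zeros of $f_N$ in $D(c,r)$ are exactly the elements of $\mathcal{S}_N(D(c,r))$, so the left-hand side becomes $\sum_{z\in\mathcal{S}_N(D(c,r))}\log\frac{r}{|z-c|}$. Dividing through by $N$ produces precisely
\begin{equation*}
S_N(c,r)=-\frac{1}{N}\log|f_N(c)|+\frac{1}{2\pi}\int_{-\pi}^{\pi}\frac{1}{N}\log|f_N(c+re^{i\theta})|\,d\theta,
\end{equation*}
which is the stated identity.

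The only subtle point I foresee is the possibility that $f_N$ has a zero on the boundary circle $|s-c|=r$, which would make the integrand $\log|f_N(c+re^{i\theta})|$ improper at finitely many $\theta$. This is, however, the standard scenario handled in the usual proof: since the zeros of the entire function $f_N$ are isolated, such boundary zeros contribute only logarithmic singularities that are integrable, and one establishes the formula by a straightforward limiting argument, first proving it for radii $r'<r$ free of boundary zeros and then letting $r'\uparrow r$ via dominated convergence. Thus this technicality does not present a genuine obstacle, and the result follows.
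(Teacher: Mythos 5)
Your proposal is correct and follows the standard route that the paper implicitly adopts by citing \cite[Theorem~5.1]{BMOR20}: observe that $\mathcal{Z}(w)=(w-1)\zeta_K(w)$ is entire because the factor $w-1$ cancels the simple pole of $\zeta_K$, hence $f_N$ is entire, apply classical Jensen's formula on $\overline{D(c,r)}$, and divide by $N$. The remark about boundary zeros is a sound and standard technicality; nothing is missing.
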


Applying Jensen's formula and Lemma \ref{Lem-4.3}, we obtain the following  upper bound for $S_N(c,r)$.

\begin{proposition}\label{Je-bd}
Let $c$, $r$, and $\sigma_1$ be real numbers such that
\begin{align*}
c-r<\frac{1}{2}<1<c<\sigma_1<c+r.
\end{align*}
Let  $F_{c,r}:[-\pi,\pi]\to\mathbb{R}$ be an even function such that $F_{c,r}(\theta)\geq\frac{1}{N_m}\log|f_{N_m}(c+re^{i\theta})|$. Then we have
\begin{align*}
\limsup_{m\to\infty}S_{N_m}(c,r)\leq\log\Big(\frac{1}{\sqrt{(c-1)^2+T^2}}\frac{\zeta_K(c)}{\zeta_K(2c)}\Big)+\frac{1}{\pi}\int_{0}^{\pi}F_{c,r}(\theta)d\theta.
\end{align*}
\end{proposition}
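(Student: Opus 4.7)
The proposal is to assemble the bound by combining Jensen's formula with Lemma~\ref{Lem-4.3}, exploiting the evenness of $F_{c,r}$ to cut the boundary integral in half.

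First, I would invoke Lemma~\ref{Lem-4.3} applied to our $c > 1$ to extract a sequence $(N_m)_{m=1}^{\infty}$ of natural numbers for which $f_{N_m}(c) \neq 0$, so that Jensen's formula is legitimate at the centre. Jensen's formula at $c$ with radius $r$ then yields
\begin{align*}
S_{N_m}(c,r) = -\frac{1}{N_m}\log|f_{N_m}(c)| + \frac{1}{2\pi}\int_{-\pi}^{\pi}\frac{1}{N_m}\log|f_{N_m}(c+re^{i\theta})|\, d\theta
\end{align*}
for every $m$.

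Next, I would handle the two terms separately. For the boundary integral, the hypothesis $F_{c,r}(\theta) \geq \frac{1}{N_m}\log|f_{N_m}(c+re^{i\theta})|$ gives pointwise (hence integrated) domination, and the evenness of $F_{c,r}$ collapses the range of integration:
\begin{align*}
\frac{1}{2\pi}\int_{-\pi}^{\pi}\frac{1}{N_m}\log|f_{N_m}(c+re^{i\theta})|\, d\theta
\leq \frac{1}{2\pi}\int_{-\pi}^{\pi} F_{c,r}(\theta)\, d\theta = \frac{1}{\pi}\int_{0}^{\pi} F_{c,r}(\theta)\, d\theta.
\end{align*}
This upper bound is independent of $m$, so it persists under $\limsup$. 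For the centre term, Lemma~\ref{Lem-4.3} states directly that
\begin{align*}
\limsup_{m\to\infty}\Big(-\frac{1}{N_m}\log|f_{N_m}(c)|\Big) \leq \log\Big(\frac{1}{\sqrt{(c-1)^2+T^2}}\,\frac{\zeta_K(c)}{\zeta_K(2c)}\Big).
\end{align*}

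Finally, I would add the two estimates using the subadditivity $\limsup(A_m + B_m) \leq \limsup A_m + \limsup B_m$ (applied with $B_m$ being the uniformly bounded boundary integral, whose $\limsup$ is controlled by the $F_{c,r}$-integral), producing exactly the stated inequality. The assumptions $c - r < \tfrac{1}{2} < 1 < c < \sigma_1 < c + r$ are used only implicitly here, to ensure that $c > 1$ (so that Lemma~\ref{Lem-4.3} applies and $\mathcal{Z}(c+iT) \neq 0$) and that the disk $D(c,r)$ genuinely captures the portion of the critical strip needed downstream; they play no active role in the present argument. There is no real obstacle: this proposition is a packaging lemma, and the only care required is to ensure the $\limsup$ is split correctly and that the evenness reduction of the integral is applied cleanly.
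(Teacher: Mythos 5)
Your proof is correct and coincides with the paper's (unwritten) argument: the paper states that the proposition follows from "Applying Jensen's formula and Lemma \ref{Lem-4.3}," which is exactly the combination you carry out, with the evenness of $F_{c,r}$ halving the boundary integral and the $\limsup$ split handled correctly since the boundary term is bounded above uniformly in $m$.
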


\subsection{Backlund's trick}\label{B-trick-d}

We start with the following technical estimate.

\begin{lemma}  \label{bd-arg-zeta}
Let $0\le d < 1/2$ and $T\ge 5/7$. Then we have
\begin{align*}
\Big| \arg \Big( (\sigma  -1+ i T )\zeta_K(\sigma  + i T) \Big)^N\Big|_{\sigma=\frac{1}{2}}^{\frac{1}{2}+d} \Big|
& \leq \Big|\arg \Big( (\sigma -1 + i T)\zeta_K(\sigma  +i T) \Big)^N\Big|_{\sigma=\frac{1}{2}}^{\frac{1}{2}-d} \Big|\\
& + N \mathcal{E}_K (T,d) + N \frac{\pi}{2},
\end{align*}
where $\mathcal{E}_K (T,d)$ is defined as in \eqref{def-EK-1}.
\end{lemma}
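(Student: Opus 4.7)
The plan is to derive the lemma from the functional equation $\xi_K(s) = \xi_K(1-s)$, exploiting it to relate the argument variation on the segment $\sigma \in [\tfrac{1}{2}, \tfrac{1}{2}+d]$ (with $\Im s = T$) to the variation on the mirror segment $\sigma \in [\tfrac{1}{2}-d, \tfrac{1}{2}]$. Starting from $\xi_K(s) = s\,d_K^{s/2}\gamma_K(s)\mathcal{Z}(s)$ and substituting $\zeta_K(1-s) = -\mathcal{Z}(1-s)/s$, the functional equation rearranges to
\[
\mathcal{Z}(s) \;=\; \frac{1-s}{s}\; d_K^{1/2-s}\; \frac{\gamma_K(1-s)}{\gamma_K(s)}\; \mathcal{Z}(1-s).
\]
Raising to the $N$-th power and tracking the continuous variation of $\arg$ along the horizontal segment (no zeros cross, by our standing assumption that $T$ avoids zeros of $\xi_K$), and noting that $d_K^{N(1/2-s)}$ has argument $-NT\log d_K$, constant in $\sigma$, yields
\[
\Delta_{[\frac{1}{2},\frac{1}{2}+d]}\arg \mathcal{Z}(s)^N \;=\; \Delta\arg\!\Big(\tfrac{1-s}{s}\Big)^N + \Delta\arg\!\Big(\tfrac{\gamma_K(1-s)}{\gamma_K(s)}\Big)^N + \Delta\arg \mathcal{Z}(1-s)^N.
\]

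Next, I would handle $\Delta\arg \mathcal{Z}(1-s)^N$ via the conjugate symmetry $\mathcal{Z}(\bar w) = \overline{\mathcal{Z}(w)}$. As $s$ traverses $\tfrac{1}{2}+iT \to \tfrac{1}{2}+d+iT$, the point $1-s$ traverses $\tfrac{1}{2}-iT \to \tfrac{1}{2}-d-iT$ in the lower half-plane, and the continuous argument of $\mathcal{Z}(1-s)^N$ equals the negative of $\arg \mathcal{Z}(\sigma+iT)^N$ evaluated on the mirror segment. Hence this term equals $-\Delta_{[\frac{1}{2},\frac{1}{2}-d]}\arg \mathcal{Z}(\sigma+iT)^N$. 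Rearranging and applying the triangle inequality reduces the lemma to the two bounds
\[
\Big|\Delta\arg\!\Big(\tfrac{\gamma_K(1-s)}{\gamma_K(s)}\Big)^N\Big| \le N\,\mathcal{E}_K(T,d), \qquad \Big|\Delta\arg\!\Big(\tfrac{1-s}{s}\Big)^N\Big| \le N\tfrac{\pi}{2}.
\]

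For the Gamma ratio, I would unpack $\gamma_K$ into its $r_1+r_2$ and $r_2$ copies of $\Gamma(s/2)$ and $\Gamma((s+1)/2)$ (the accompanying powers of $\pi$ contribute arguments that are constant in $\sigma$ along the horizontal segment), and apply conjugate symmetry to the $\Gamma((1-s+a)/2)$ factors. The four endpoint values then combine into precisely the symmetric second difference $[\Im\log\Gamma((\sigma+a+iT)/2)]_{\frac{1}{2}}^{\frac{1}{2}+d} + [\Im\log\Gamma((\sigma+a+iT)/2)]_{\frac{1}{2}}^{\frac{1}{2}-d}$ that defines $\mathcal{E}_a(T,d)$, and the weighted sum reproduces $\mathcal{E}_K(T,d)$ via \eqref{def-EK-1}. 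For the rational prefactor a direct calculation gives
\[
\Delta_{[\frac{1}{2},\frac{1}{2}+d]}\arg\tfrac{1-s}{s} \;=\; 2\arctan(2T) \;-\; \arctan\!\tfrac{T}{\frac{1}{2}-d} \;-\; \arctan\!\tfrac{T}{\frac{1}{2}+d},
\]
and one verifies that under the hypotheses $T \ge 5/7$, $0 \le d < 1/2$, both $2\arctan(2T)$ and $\arctan(T/(\tfrac{1}{2}-d)) + \arctan(T/(\tfrac{1}{2}+d))$ lie in the open interval $(\pi/2,\pi)$---the former because $2T > 1$, the latter by the arctangent addition formula together with $T^2 > 1/4 - d^2$---so their difference has absolute value strictly less than $\pi/2$. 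The delicate point of the argument is precisely this last numerical confinement: absent the range restrictions, the rational prefactor could a priori contribute up to $N\pi$, and the improvement from $\pi$ to $\pi/2$ hinges on both arctangent expressions being squeezed into the same half-period. Everything else is bookkeeping around the functional equation and conjugate symmetry.
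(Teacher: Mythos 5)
Your proof is correct and follows essentially the same route as the paper's: both derive the identity relating $\Delta_{[1/2,1/2+d]}\arg\mathcal{Z}(s)^N$ to $\Delta_{[1/2,1/2-d]}\arg\mathcal{Z}(s)^N$ from the functional equation $\xi_K(s)=\xi_K(1-s)$ together with conjugate symmetry, then isolate and bound the Gamma-ratio contribution by $N\mathcal{E}_K(T,d)$ and the rational prefactor by $N\pi/2$. The only cosmetic difference is in verifying the arctangent bound: the paper applies the subtraction formula $\arctan x - \arctan y = \arctan\frac{x-y}{1+xy}$ to produce two terms of opposite sign, each in $(-\pi/2,\pi/2)$, whereas you show both $2\arctan(2T)$ and $\arctan\frac{T}{1/2-d}+\arctan\frac{T}{1/2+d}$ lie in $(\pi/2,\pi)$; these establish the same inequality.
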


\begin{proof}
By the  functional equation \eqref{FE} and the fact that $\xi_K(s)=\overline{\xi_K(\bar{s})}$, we have
\begin{align}\label{lem-bd-arg-zeta-1}
 \arg \xi_K (\sigma  + i T) \Big|_{\sigma=\frac{1}{2}}^{\frac{1}{2}+d}
 =  - \arg  \xi_K (\sigma + i T) \Big|_{\sigma=\frac{1}{2}}^{\frac{1}{2}-d} .
\end{align}
Since
$$
\arg (\sigma + i T) + \arg B^{(\sigma + i T)/2}=  \arctan \frac{T}{\sigma} + \frac{T}{2} \log B, 
$$
by \eqref{def-xi}, we have
\begin{align}\label{expression-arg-xi}
 \begin{split}
\arg \xi_K (\sigma + i T)   
&= \arctan \frac{T}{\sigma} + \frac{T}{2} \log B +(r_1 + r_2)  \Im \log \Gamma  \Big(\frac{\sigma + i T}{2} \Big)  +  r_2 \Im \log \Gamma \Big(\frac{\sigma + i T+1}{2} \Big) \\
& +\arg \Big( (\sigma + i T-1) \zeta_K (\sigma + i T)\Big).
 \end{split}
\end{align}
As we know that for $\pm xy<1$, 
$$
\arctan x \pm \arctan y = \arctan \frac{x\pm y}{1\mp xy},
$$
for $0 \le d < 1/2$, we have
\begin{align}\label{arctan-bd}
 \begin{split}
&\Big| \arctan \frac{T}{\frac{1}{2}+d} - \arctan \frac{T}{\frac{1}{2}} + \arctan \frac{T}{\frac{1}{2}-d} - \arctan \frac{T}{\frac{1}{2}}\Big|\\
&= \Big| \arctan \frac{ \frac{T}{\frac{1}{2}+d} -\frac{T}{\frac{1}{2}} }{1+ \frac{T}{\frac{1}{2}+d}\frac{T}{\frac{1}{2}} } 
+\arctan \frac{ \frac{T}{\frac{1}{2}-d} -\frac{T}{\frac{1}{2}} }{1+ \frac{T}{\frac{1}{2}-d}\frac{T}{\frac{1}{2}} }\Big|\\
&\le \frac{\pi}{2}.
 \end{split}
\end{align}
Now, applying  the triangle inequality, by  \eqref{lem-bd-arg-zeta-1},  \eqref{expression-arg-xi}, and \eqref{arctan-bd}, we obtain
\begin{align*}
\Big| \arg \Big( (\sigma  -1+ i T )\zeta_K(\sigma  + i T) \Big)\Big|_{\sigma=\frac{1}{2}}^{\frac{1}{2}+d} \Big|
&\le\Big| \arg \Big( (\sigma  -1+ i T )\zeta_K(\sigma  + i T) \Big)\Big|_{\sigma=\frac{1}{2}}^{\frac{1}{2}-d} \Big|    +\mathcal{E}_K(T,d)+  \frac{\pi}{2}.
\end{align*}
Recalling that 
$$\arg \Big( (\sigma  -1+ i T )\zeta_K(\sigma  + i T) \Big)^N\Big|_{\sigma=\frac{1}{2}}^{\frac{1}{2}\pm d}
  =N \arg \Big( (\sigma  -1+ i T )\zeta_K(\sigma  + i T) \Big)\Big|_{\sigma=\frac{1}{2}}^{\frac{1}{2}\pm d}, 
$$
we conclude the proof.
\end{proof}

As argued in  \cite{BMOR20} and \cite{Tr15}, we require the following version of ``Backlund's trick''.

\begin{proposition}[Backlund's trick]\label{BT}
Let $c$ and $r$ be real numbers. Set 
\[
\sigma_1 = c + \frac{(c-1/2)^2}{r} \quad \text{and} \quad  \delta = 2c - \sigma_1 -\frac{1}{2}.
\]
If $1 < c < r$ and $0 < \delta < \frac{1}{2}$, then
\[ 
\Big|  \arg \Big( (\sigma + iT-1)\zeta_K(\sigma + iT) \Big) \Big|_{\sigma = \sigma_1}^{1/2} \Big| \leq \frac{\pi S_N(c,r)}{2 \log (r/ (c-1/2))} + \frac{E_K(T,\delta)}{2} +\frac{\pi}{N}+ \frac{\pi}{2N}+
\frac{\pi}{4}.
\]
\end{proposition}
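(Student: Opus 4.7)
Let $V := \bigl|\arg \mathcal{Z}(\sigma + iT)\bigl|_{\sigma = \sigma_1}^{1/2}\bigr|$ be the quantity to be bounded. The strategy is to combine two upper bounds for $V$: a direct one from zero counting for $f_N$ via Jensen's formula on $D(c,r)$, and a symmetrized one from the functional-equation reflection of Lemma~\ref{bd-arg-zeta}. Averaging these two bounds is what produces the factor $1/2$ appearing in the denominator of the first term and on the error terms $E_K(T,\delta)$ and $\pi/2$.

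The heart of the argument is the geometric fact, built into the choice of $\delta$, that $1/2+\delta$ and $\sigma_1$ are reflections of each other through $c$: the sub-interval $[1/2+\delta,\sigma_1]$ is a chord of $D\bigl(c,(c-1/2)^2/r\bigr)$, so via Jensen on $D(c,r)$, each zero of $f_N$ therein contributes at least $\log\bigl(r^2/(c-1/2)^2\bigr)=2\log\bigl(r/(c-1/2)\bigr)$ to $N\cdot S_N(c,r)$. This is precisely what yields the factor $2$ in the denominator of the first term of the stated inequality.

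Concretely, I would split $V\le V_A+V_B$ at the point $1/2+\delta$, with $V_A$ on the ``far'' piece $[1/2+\delta,\sigma_1]$ and $V_B$ on the ``near'' piece $[1/2,1/2+\delta]$. For $V_A$, an analogue of Lemma~\ref{zeros-f} restricted to $[1/2+\delta,\sigma_1]$ gives $NV_A<(a_N+1)\pi$, where $a_N$ counts the zeros of $f_N$ on that sub-interval; Jensen with the doubled weight above bounds $a_N$ by $N\,S_N(c,r)/\bigl(2\log(r/(c-1/2))\bigr)$. For $V_B$, I would apply Lemma~\ref{bd-arg-zeta} with $d=\delta$ (admissible since $0<\delta<1/2$) to trade $V_B$ for the reflected variation $\tilde V_B:=\bigl|\arg \mathcal{Z}(\sigma+iT)\bigl|_{1/2}^{1/2-\delta}\bigr|$ at the cost of $\mathcal{E}_K(T,\delta)+\pi/2$, and then bound $\tilde V_B$ by a zero count on $[1/2-\delta,1/2]$. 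Averaging the direct zero-count bound for $V_B$ against this reflected one halves the contributions of $\mathcal{E}_K(T,\delta)$ and of $\pi/2$, producing after invoking $\mathcal{E}_K(T,\delta)\le E_K(T,\delta)$ from~\eqref{EK-bd} the terms $E_K(T,\delta)/2$ and $\pi/4$, together with the halved rounding $\pi/(2N)$; the remaining $\pi/N$ comes from the rounding in $V_A$.

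The main obstacle is the combinatorial bookkeeping in the averaging step: the Jensen weights associated to the reflected zeros on $[1/2-\delta,1/2]$ (at distance up to $c-1/2+\delta$ from $c$) and to the near-critical zeros on $[1/2,1/2+\delta]$ (at distance up to $c-1/2$) must be combined with the doubly weighted far zeros on $[1/2+\delta,\sigma_1]$ so that the final bound collapses to the single clean expression $\pi\,S_N(c,r)/\bigl(2\log(r/(c-1/2))\bigr)$. Matching all constants, and ensuring compatibility with the monotonicity in $\delta$ provided by Lemma~\ref{EK-bd-final}, constitutes the delicate part of the calculation.
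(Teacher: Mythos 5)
Your overall strategy (Jensen's formula on $D(c,r)$ combined with the reflection supplied by Lemma~\ref{bd-arg-zeta}) does match the one used in the paper, and you correctly identify the key geometric fact: since $\tfrac12+\delta = 2c-\sigma_1$, the interval $[\tfrac12+\delta,\sigma_1]$ lies within distance $(c-\tfrac12)^2/r$ of $c$, so each zero of $f_N$ there contributes at least $2\log\bigl(r/(c-\tfrac12)\bigr)$ to $N\,S_N(c,r)$. However, the ``averaging'' mechanism you propose for the near-critical piece $V_B$ does not close, and the difficulty you flag as ``the delicate part'' is in fact a genuine gap. Concretely: if you bound $V_B$ by averaging the direct zero count $c_N$ on $[\tfrac12,\tfrac12+\delta]$ with the reflected count $\tilde c_N$ on $[\tfrac12-\delta,\tfrac12]$, you need $2a_N+c_N+\tilde c_N \le N S_N(c,r)/\log\bigl(r/(c-\tfrac12)\bigr)$. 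But the Jensen contribution of a reflected zero at $\tfrac12-\tilde\delta$ is only $\log\bigl(r/(c-\tfrac12+\tilde\delta)\bigr)$, which is \emph{strictly less} than $\log\bigl(r/(c-\tfrac12)\bigr)$, so the reflected zeros are uniformly under-weighted and the needed inequality fails. A bare average of the two bounds therefore cannot produce the clean denominator $2\log\bigl(r/(c-\tfrac12)\bigr)$.

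What actually makes the argument work (following the paper and \cite{BMOR20}) is an explicit \emph{pairing}, not an average. The paper introduces the quantities $\delta_k$ at which the argument has increased by at least $(k-1)\pi/N$, and then, for indices $k$ past a threshold of size roughly $1+N\mathcal{E}_K(T,\delta)/\pi+N/2$, constructs a matching reflected zero $z_{-k}=\tfrac12-\delta_{-k}$ with the crucial property $\delta_{-k}\le\delta_k$. Because the near zero $z_k$ sits \emph{closer} to $c$ by exactly the amount that $z_{-k}$ sits farther, the pair's combined contribution is
$\log\tfrac{r}{c-1/2-\delta_k}+\log\tfrac{r}{c-1/2+\delta_{-k}}\ge\log\tfrac{r^2}{(c-1/2)^2-\delta_k^2}\ge 2\log\tfrac{r}{c-1/2}$.
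Only via this compensating pairing, together with the far zeros in $[\tfrac12+\delta,\sigma_1]$, does one obtain $NS_N(c,r)\ge\bigl(2b_N-\tfrac{NE_K(T,\delta)+N\pi/2+\pi}{\pi}\bigr)\log\tfrac{r}{c-1/2}$, from which the stated bound follows via Definition~\ref{defn} and \eqref{equ:bd-n over N}. Your sketch would need to replace the averaging step with this pairing construction (including the threshold accounting, which produces the $E_K(T,\delta)/2$, $\pi/4$, and $\pi/(2N)$ terms) to become a complete proof.
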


\begin{proof}
By the conditions on $c$ and $r$ and the definitions of $\sigma_1$ and $\delta$, we know that
\[
c -r < \frac{1}{2} - \delta \leq \frac{1}{2} \leq \frac{1}{2} + \delta = 2c-\sigma_1 \leq c \leq \sigma_1 < c+r.
\]
As  $\log \frac{r}{|z-c|} >0$ for $z \in D(c,r)$, we see that 
\[
S_N(c,r) = \frac{1}{N} \sum_{z \in \mathcal{S}_N (D (c,r))} \log \frac{r}{|z-c|} \geq \frac{1}{N }\sum_{z \in \mathcal{S}_N ((c-r,\sigma_1])} \log \frac{r}{|z-c|}.
\]
Recall that by Lemma \ref{zeros-f},  there are at least $b_N$ values of $\sigma$   satisfying $\sigma \in [1/2,\sigma_1]$ and $f_N(\sigma) =0$, where $b_N$ is defined as in Definition \ref{defn}.  For $1 \leq k \leq b_N$, we then set $\delta_k$ as the smallest non-negative  real number such that 
\begin{align}\label{cond-k}
f_N (1/2+ \delta_k) =0 \quad \text{and} \quad k-1 \leq \frac{1}{\pi} \Big| \arg \Big( (\sigma + iT-1)\zeta_K(\sigma + iT) \Big)^N \Big|_{\sigma = 1/2}^{1/2 + \delta_k}  \Big|.
\end{align}
Writing $z_k = \frac{1}{2} + \delta_k$, we let $x_1$ denote the number of $z_k$ with $z_k \in [1/2, 1/2 + \delta) = [ 1/2, 2c- \sigma_1)$ and  let $x_2 $ denote the number of $z_k$ with $z_k \in [2c- \sigma_1, \sigma_1]$. We note that $x_2 = b_N -x_1$ and that
\[
0 \leq \delta_1 < \delta_2  < \cdots< \delta_{x_1} < \delta \leq  \delta_{x_1 +1} < \cdots < \delta_{b_N} \leq \sigma_1 -1/2.
\]
From  \eqref{EK-bd}, \eqref{cond-k}, and Lemma  \ref{bd-arg-zeta},  it follows that 
\begin{align*}
k-1&\le  \frac{1}{\pi}\Big|\arg \Big( (\sigma -1 + i T)\zeta_K(\sigma  +i T) \Big)^N\Big|_{\sigma=\frac{1}{2}}^{\frac{1}{2}-\delta_k} \Big| + \frac{1}{\pi}N E_K (T,\delta_k) + \frac{N}{2}
\end{align*}
whenever $1 \leq k \leq x_1$ (which implies that $\delta_k< \delta <\frac{1}{2}$).

For each $j\geq 1$, if there exists a $k$ (chosen to be minimal) such that 
\begin{equation*}
k - 1-  \frac{1}{\pi} N E_K(T, \delta_k) - \frac{N}{2} \geq j,
\end{equation*}
then $f_N$ has at least $j$ zeros in $[ 1/2- \delta_k, 1/2)$ since
\[
\frac{1}{\pi} \Big| \arg \Big( (\sigma + iT-1)\zeta_K(\sigma + iT) \Big)^N \Big|_{\sigma = 1/2}^{1/2 - \delta_k}  \Big|   \ge k-1 -  \frac{1}{\pi} N E_K(T, \delta_k) - \frac{N}{2} \geq j.
\]
For such an instance, we define $\delta_{-k}$ as the smallest values of these zeros (to avoid possible repetition), and we shall say that the zero $z_k =1/2 + \delta_k$ has a pair $z_{-k} = 1/2 - \delta_{-k}$. We note that $\delta_{-k} \leq \delta_k$ by the construction.

By the same argument as in  \cite[pp. 1467-1468]{BMOR20}, we have
\[
S_{N}(c,r) \geq \frac{2b_N - \frac{NE_K(T,\delta) + \frac{N\pi}{2} + \pi}{\pi}}{N} \log \Big( \frac{r}{c-1/2}\Big), 
\]
and thus
\[
\frac{b_N}{N} \leq \frac{S_N(c,r)}{2 \log (r/ (c-1/2))} + \frac{E_K(T,\delta)}{2\pi} +\frac{1}{4}+\frac{1}{2N},
\]
which combined with \eqref{equ:bd-n over N} completes the proof. 
\end{proof}

\subsection{Constructing  and bounding $F_{c,r} $}

We first recall the convexity bound for $\zeta_K(s)$ established by Rademacher \cite[Theorem 4]{Ra59}.
\begin{proposition}\label{Rademacher}
Let $\eta\in(0,\frac{1}{2}]$ and $s=\sigma+it$.  If $-\eta\leq\sigma\leq1+\eta$, then one has
\begin{align*}
|\zeta_K(s)|
\leq 3\Big|\frac{1+s}{1-s}\Big|\Big(d_K\Big(\frac{|1+s|}{2\pi}\Big)^{n_K}\Big)^{\frac{1+\eta-\sigma}{2}}\zeta(1+\eta)^{n_K}.
\end{align*}
Also, for $\sigma \in [-\frac{1}{2},0)$, one has
\begin{align}\label{Ra-conv-bd}
|\zeta_K(s)|\leq3\Big|\frac{1+s}{1-s}\Big|\Big(d_K\Big(\frac{|1+s|}{2\pi}\Big)^{n_K}\Big)^{\frac{1}{2}-\sigma}\zeta(1-\sigma)^{n_K}.
\end{align}
\end{proposition}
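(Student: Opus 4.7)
The plan is to follow Rademacher's classical route via the Phragm\'en--Lindel\"of principle, interpolating an explicit bound in the strip $-\eta \le \sigma \le 1+\eta$ from explicit bounds on the two vertical edges $\sigma = 1+\eta$ and $\sigma = -\eta$. The linear-in-$\sigma$ exponent $\frac{1+\eta-\sigma}{2}$ appearing in the claim is precisely the linear interpolation of the logarithms of these two edge bounds, which is exactly the output shape of Phragm\'en--Lindel\"of.

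On the right edge $\sigma = 1+\eta$, absolute convergence of the defining Dirichlet series, via Lemma~\ref{bd-zeta-K}, yields $|\zeta_K(1+\eta+it)| \le \zeta(1+\eta)^{n_K}$, which matches the claim since the interpolation exponent vanishes there. On the left edge $\sigma = -\eta$, I would invoke the functional equation \eqref{FE} in the solved form
\[
\zeta_K(s) \;=\; d_K^{\frac12 - s}\,\frac{\gamma_K(1-s)}{\gamma_K(s)}\,\zeta_K(1-s),
\]
estimate $|\zeta_K(1-s)| \le \zeta(1+\eta)^{n_K}$ (since $\Re(1-s) = 1+\eta > 1$), and control the Gamma ratio by Stirling's formula. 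Replacing the sharper "true" Stirling exponent $r_2 + \eta n_K$ by the uniform upper exponent $n_K(1+2\eta)/2$ (at a loss of a factor $(|1+s|/(2\pi))^{r_1/2}$) yields an upper bound of shape $d_K^{(1+2\eta)/2}(|1+s|/(2\pi))^{n_K(1+2\eta)/2}\zeta(1+\eta)^{n_K}$ up to an absolute constant.

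With the two edge bounds in hand, I would apply Phragm\'en--Lindel\"of to the regularised function $(s-1)\zeta_K(s)$ divided by an analytic majorant, for instance $(s+1)\bigl(d_K(|1+s|/(2\pi))^{n_K}\bigr)^{(1+\eta-s)/2}$ with principal branches. The absolute value of this majorant is exactly the desired interpolated bound, since $\log(|1+s|/(2\pi))$ is real and so contributes $(1+\eta-\sigma)$ to the exponent. The multiplier $(s+1)$ together with the regularising $(s-1)$ accounts for the $|1+s|/|1-s|$ prefactor in the claim, and the numerical constant $3$ absorbs the residual losses from Stirling and from Phragm\'en--Lindel\"of.

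The second assertion \eqref{Ra-conv-bd} on $\sigma \in [-\tfrac12, 0)$ is then an immediate specialisation of the first at $\eta = -\sigma \in (0, \tfrac12]$, so that $-\eta = \sigma$ sits exactly on the left edge; the exponent $\frac{1+\eta-\sigma}{2}$ collapses to $\tfrac12 - \sigma$ and $\zeta(1+\eta)^{n_K}$ becomes $\zeta(1-\sigma)^{n_K}$. The main obstacle is the numerical bookkeeping on the left edge: one needs a form of Stirling's formula uniform in the strip $-\tfrac12 \le \sigma \le \tfrac32$ producing a clean factor $(|1+s|/(2\pi))^{n_K(1+2\eta)/2}$ with constants surviving Phragm\'en--Lindel\"of no worse than $3$. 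Since the result is merely being recalled from \cite{Ra59}, I would defer these numerical details to Rademacher's original paper.
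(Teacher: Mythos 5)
Your proposal matches the paper's treatment: the paper simply cites Rademacher \cite[Theorem 4]{Ra59} for the first inequality (whose original proof is the Phragm\'en--Lindel\"of interpolation you sketch) and, exactly as you do, obtains the second inequality by the specialisation $\eta=-\sigma\in(0,\tfrac12]$, which places $\sigma$ on the left edge and collapses the exponent to $\tfrac12-\sigma$. No gap here; the paper does not reprove Rademacher's convexity bound either.
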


We note that the second inequality follows from the first bound by taking $\eta=-\sigma$. Moreover,  Rademacher's argument \cite{Ra59} can be used to extend \eqref{Ra-conv-bd} for $\sigma <0$ as follows (cf. \cite[Theorem 5.7]{BMOR20}). For $x\in\mathbb{R}$, let $[x]$ be the integer closest to $x$; when there are two integers equally close to $x$, we shall  choose the one closer to 0.

\begin{proposition}\label{convex-bd}
Let $s=\sigma+it$ with $\sigma<0$. Then we have
\begin{align*}
|\zeta_K(s)|\leq\Big(\frac{d_K}{(2\pi)^{n_K}}\Big)^{\frac{1}{2}-\sigma}|1+s-[\sigma]|^{n_K(\frac{1}{2}+[\sigma]-\sigma)}\prod_{j=1}^{-[\sigma]} | s+j-1|^{n_K}\zeta(1-\sigma)^{n_K}.
\end{align*}
\end{proposition}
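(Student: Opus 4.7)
My plan is to extend Rademacher's bound \eqref{Ra-conv-bd} from $\sigma\in[-\tfrac12,0)$ to all $\sigma<0$ by combining the functional equation $\xi_K(s)=\xi_K(1-s)$ with the Gamma recursion $\Gamma(z+1)=z\Gamma(z)$ applied iteratively. From the definition \eqref{def-xi} and the functional equation, one obtains
\begin{equation*}
\zeta_K(s)=d_K^{\frac12-s}\,\frac{\gamma_K(1-s)}{\gamma_K(s)}\,\zeta_K(1-s).
\end{equation*}
Since $\Re(1-s)=1-\sigma>1$, Lemma \ref{bd-zeta-K} yields $|\zeta_K(1-s)|\le\zeta(1-\sigma)^{n_K}$, producing the $\zeta(1-\sigma)^{n_K}$ factor appearing in the claim. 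The task thus reduces to bounding the Gamma-factor ratio $|d_K^{1/2-s}\gamma_K(1-s)/\gamma_K(s)|$ by the remaining piece of the right-hand side.

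Setting $k:=-[\sigma]\ge 0$, so that the shifted real part $\sigma+k$ lies in $[-\tfrac12,\tfrac12)$, I would apply the Gamma recursion $k$ times to each of the factors $\Gamma(s/2)^{r_1+r_2}$ and $\Gamma((s+1)/2)^{r_2}$ inside $\gamma_K(s)$. Explicitly,
\begin{equation*}
\Gamma(\tfrac{s}{2})=\frac{\Gamma(\tfrac{s}{2}+k)}{\prod_{j=0}^{k-1}(\tfrac{s}{2}+j)},\qquad \Gamma(\tfrac{s+1}{2})=\frac{\Gamma(\tfrac{s+1}{2}+k)}{\prod_{j=0}^{k-1}(\tfrac{s+1}{2}+j)}.
\end{equation*}
After raising these to the respective multiplicities $r_1+r_2$ and $r_2$, the denominators combine --- using $r_1+2r_2=n_K$ together with the pairing $(s/2+j)((s+1)/2+j)=\tfrac14(s+2j)(s+2j+1)$ and the Legendre duplication $\Gamma(z)\Gamma(z+\tfrac12)=2^{1-2z}\sqrt{\pi}\,\Gamma(2z)$ --- into exactly $\prod_{j=1}^{k}|s+j-1|^{n_K}$, modulo absolute constants and powers of $\pi$ and $2$.

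The shifted Gamma factors $\Gamma(\tfrac{s}{2}+k)$ and $\Gamma(\tfrac{s+1}{2}+k)$ now have arguments whose real parts lie in $[\tfrac14-\tfrac14,\tfrac14+\tfrac14)$, i.e., they correspond to the point $s-2[\sigma]$ whose real part sits in $[-\tfrac12,\tfrac12)$. Consequently, the Phragm\'en--Lindel\"of argument of Rademacher underlying \eqref{Ra-conv-bd} applies cleanly at this shifted point and supplies the factor $|1+s-[\sigma]|^{n_K(1/2+[\sigma]-\sigma)}$, the fractional exponent reflecting the distance from $\sigma$ to its nearest integer. Matching the powers of $d_K$, $\pi$, and $2$ that result from the functional equation and the duplication formula then gives the claimed exponent $\tfrac12-\sigma$ on $d_K/(2\pi)^{n_K}$, completing the bound.

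The main obstacle will be the delicate bookkeeping of numerical factors. A direct computation shows that the functional equation alone provides $d_K^{1/2-\sigma}\pi^{(2\sigma-1)n_K/2}$, whereas the target has $(d_K/(2\pi)^{n_K})^{1/2-\sigma}=d_K^{1/2-\sigma}\pi^{-(1/2-\sigma)n_K}\,2^{-(1/2-\sigma)n_K}$; the two $\pi$-exponents agree, but a factor $2^{-(1/2-\sigma)n_K}$ must be produced by the Gamma-factor manipulations. It is precisely the $k$-fold duplication step that accounts for these missing powers of $2$, and threading this accounting through the $r_1+r_2$ and $r_2$ multiplicities is the technical heart of the argument. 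As remarked in the excerpt just before the statement, this is essentially an adaptation of \cite[Theorem 5.7]{BMOR20} from the Dirichlet $L$-function setting to our Dedekind-zeta setting.
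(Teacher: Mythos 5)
Your high-level plan is right (functional equation, then bound $|\zeta_K(1-s)|$ by $\zeta(1-\sigma)^{n_K}$, then control the Gamma-factor ratio), but the middle step has a genuine gap. Applying the recursion $\Gamma(z+1)=z\Gamma(z)$ $k$ times to $\Gamma(s/2)$ shifts the argument $z=s/2$ by $k$, which corresponds to shifting $s$ by $2k$, not by $k$. Consequently your "shifted point" is $s+2k = s-2[\sigma]$, whose real part $\sigma-2[\sigma]$ is \emph{not} in $[-\tfrac12,\tfrac12)$ once $[\sigma]\neq 0$; and the denominators $\prod_{j=0}^{k-1}(s/2+j)\prod_{j=0}^{k-1}((s+1)/2+j)$ give $2k$ linear factors $\propto \prod_{i=0}^{2k-1}(s+i)$ rather than the $k$ factors $\prod_{j=1}^{k}(s+j-1)$ appearing in the claim. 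Moreover, the Legendre duplication step you invoke to pair $(s/2+j)$ with $((s+1)/2+j)$ can only collapse the two Gamma families cleanly when they occur with the \emph{same} multiplicity; here the multiplicities are $r_1+r_2$ and $r_2$, so the duplication formula does not close the bookkeeping unless $r_1=0$.

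The paper's proof avoids all this by shifting the \emph{ratio} $\Gamma\bigl(\tfrac{a}{2}+\tfrac{1-s}{2}\bigr)/\Gamma\bigl(\tfrac{a}{2}+\tfrac{s}{2}\bigr)$ under $s\mapsto s+k$, not the individual denominators. On the $\Gamma$-argument scale this is a shift by $k/2$ — a half-integer when $k$ is odd — and the identity taken from \cite[Thm.~5.7]{BMOR20} absorbs the resulting parity issue via the reflection formula (the sine factors), with $b$ chosen so those factors are $\pm1$. That identity is precisely what produces the single product $\prod_{j=1}^{k}(s+j-1)$ and the power $2^{-k}$. The shifted ratio then lives where $\Re(s+k)=\sigma-[\sigma]\in[-\tfrac12,\tfrac12)$, so one can apply Rademacher's Lemmata 1 and 2 (bounds on Gamma ratios in a vertical strip) rather than a Phragm\'en–Lindel\"of bound on $\zeta_K$ as your proposal suggests. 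If you want to fix your write-up, you should replace the direct $\Gamma(z+1)=z\Gamma(z)$ manipulation of $\Gamma(s/2)$ by this ratio-shift identity and drop the duplication formula from the argument.
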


\begin{proof}
From the functional equation \eqref{FE} we have
\begin{align*}
|\zeta_K(s)|&\leq d_K^{1/2-\sigma}\Big|\frac{\gamma_K(1-s)}{\gamma_K(s)}\Big||\zeta_K(1-s)|\\
&=d_K^{1/2-\sigma}\pi^{(\sigma-\frac{1}{2})n_K}\Big|\frac{\Gamma(\frac{1}{2}+\frac{1-s}{2})}{\Gamma(\frac{1}{2}+\frac{s}{2})}\Big|^{r_2}\Big|\frac{\Gamma(\frac{1-s}{2})}{\Gamma(\frac{s}{2})}\Big|^{r_1+r_2}|\zeta_K(1-s)|.
\end{align*}

As $\sigma<0$, by Lemma \ref{bd-zeta-K}, we have $|\zeta_K(1-s)|\leq{\zeta(1-\sigma)}^{n_K}$. It remains to estimate the ratios of gamma functions. It was obtained in the proof of \cite[Theorem 5.7]{BMOR20} that for $a,b \in \{0,1 \}$ and $k\in\Bbb{Z}$,
\begin{align*}
\frac{\Gamma(\frac{a}{2}+\frac{1-s}{2})}{\Gamma(\frac{a}{2}+\frac{s}{2})}
=\frac{\Gamma(\frac{b}{2}+\frac{1-(s+k)}{2})}{\Gamma(\frac{b}{2}+\frac{s+k}{2})}2^{-k}\Big(\prod_{j=1}^{k} (s+j-1)\Big)\frac{\sin(\frac{\pi}{2}(s+k+1-b))}{\sin(\frac{\pi}{2}(s+1-a))}.
\end{align*}

Setting $a=0$ and $a=1$ and taking $b\equiv k\  (\modd 2) $ and $b\equiv k+1\ (\modd 2)$, respectively, we can make sine factors $\pm1$. Thus, upon choosing $k=-[\sigma]$ and applying \cite[Lemmata 1 and 2]{Ra59} to $\frac{\Gamma(\frac{b}{2}+\frac{1-(s+k)}{2})}{\Gamma(\frac{b}{2}+\frac{s+k}{2})}$, we conclude that
\begin{align*}
\Big|\frac{\Gamma(\frac{1-s}{2})}{\Gamma(\frac{s}{2})}\Big|^{r_1+r_2}\leq\Big(\frac{1}{2}|1+s-[\sigma]|\Big)^{(\frac{1}{2}+[\sigma]-\sigma)(r_1+r_2)}2^{[\sigma](r_1+r_2)}\Big(\prod_{j=1}^{-[\sigma]} | s+j-1 |\Big)^{r_1+r_2}
\end{align*}
and 
\begin{align*}
\Big|\frac{\Gamma(\frac{1}{2}+\frac{1-s}{2})}{\Gamma(\frac{1}{2}+\frac{s}{2})}\Big|^{r_2}\leq\Big(\frac{1}{2}|1+s-[\sigma]|\Big)^{(\frac{1}{2}+[\sigma]-\sigma)r_2}2^{[\sigma]r_2}\Big(\prod_{j=1}^{-[\sigma]} |s+j-1|\Big)^{r_2}.
\end{align*}
Collecting above estimates and recalling the fact that $n_K=r_1+2r_2$, we obtain the desired result.
\end{proof}

\begin{lemma}
Let $\eta\in(0,\frac{1}{2}]$, $s=\sigma+it$, and $T>0$. If $\sigma\geq1+\eta$, then we have
\begin{align*}
\frac{1}{N}\log|f_N(s)|\leq\frac{1}{2}\log((\sigma-1)^2+(|t|+T)^2)+n_K\log\zeta(\sigma).
\end{align*}
If $-\eta\leq\sigma\leq1+\eta$, then we have
\begin{align*}
\frac{1}{N}\log|f_N(s)|&\leq\log3+\frac{n_K(1+\eta-\sigma)+2}{4}\log((\sigma+1)^2+(|t|+T)^2)\\
&+\frac{1+\eta-\sigma}{2}\log\Big(\frac{d_K}{(2\pi)^{n_K}}\Big) +n_K\log\zeta(1+\eta).
\end{align*}
If $\sigma\leq-\eta$, then we have
\begin{align*}
\frac{1}{N}\log|f_N(s)|&\leq n_K\log\zeta(1-\sigma)+\frac{1}{2}\log((\sigma-1)^2+(|t|+T)^2)\\
&+\frac{1-2\sigma}{2}\log\Big(\frac{d_K}{(2\pi)^{n_K}}\Big)+\frac{(1-2\sigma+2[\sigma])n_K}{4}\log((1+\sigma-[\sigma])^2+(|t|+T)^2)\\
&+\frac{n_K}{2}\sum_{j=1}^{-[\sigma]} \log((\sigma+j-1)^2+(|t|+T)^2) .
\end{align*}
\end{lemma}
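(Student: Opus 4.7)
The plan is to reduce everything to a pointwise bound on $\mathcal{Z}(s\pm iT)=(s\pm iT-1)\zeta_K(s\pm iT)$, and then apply, in each of the three $\sigma$-ranges, the appropriate input: Lemma~\ref{bd-zeta-K} on the half-plane $\Re(s)>1$, Rademacher's convexity bound (Proposition~\ref{Rademacher}) in the critical strip region, and the extended convexity bound (Proposition~\ref{convex-bd}) to the left. The triangle inequality gives
\[
|f_N(s)|\le \tfrac{1}{2}\bigl(|\mathcal{Z}(s+iT)|^N+|\mathcal{Z}(s-iT)|^N\bigr)\le \max\bigl(|\mathcal{Z}(s+iT)|,|\mathcal{Z}(s-iT)|\bigr)^N,
\]
so $\tfrac{1}{N}\log|f_N(s)|\le \max_{\pm}\log|\mathcal{Z}(s\pm iT)|$. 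Writing $s=\sigma+it$, for any real $k$ one has $|s\pm iT-k|^{2}=(\sigma-k)^{2}+(t\pm T)^{2}\le(\sigma-k)^{2}+(|t|+T)^{2}$, which is the uniform replacement that produces the $(|t|+T)^{2}$ terms appearing in the statement.

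For the case $\sigma\ge 1+\eta$, $\Re(s\pm iT)=\sigma>1$, so Lemma~\ref{bd-zeta-K} yields $|\zeta_K(s\pm iT)|\le \zeta(\sigma)^{n_K}$; combining this with $|s\pm iT-1|\le \sqrt{(\sigma-1)^{2}+(|t|+T)^{2}}$ and taking logarithms gives the first bound. For $-\eta\le \sigma\le 1+\eta$, I apply Proposition~\ref{Rademacher} to $\zeta_K(s\pm iT)$; the crucial observation is that the factor $|s\pm iT-1|=|1-(s\pm iT)|$ in $\mathcal{Z}$ cancels the denominator $|1-(s\pm iT)|$ in Rademacher's bound, leaving
\[
|\mathcal{Z}(s\pm iT)|\le 3\,|1+s\pm iT|\Big(d_K\Big(\tfrac{|1+s\pm iT|}{2\pi}\Big)^{n_K}\Big)^{(1+\eta-\sigma)/2}\zeta(1+\eta)^{n_K}.
\]
Substituting $|1+s\pm iT|\le \sqrt{(\sigma+1)^{2}+(|t|+T)^{2}}$ and taking logarithms gives the second bound, with the coefficient $\frac{n_K(1+\eta-\sigma)+2}{4}$ arising from the sum of $\tfrac{1}{2}$ (the linear factor) and $\tfrac{n_K(1+\eta-\sigma)}{4}$ (the Rademacher exponent applied to $|1+s\pm iT|^{2}$).

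For $\sigma\le -\eta$, I apply Proposition~\ref{convex-bd}, valid since $\sigma<0$. Bounding each factor $|s\pm iT-1|,\ |1+s\pm iT-[\sigma]|$, and $|s\pm iT+j-1|$ by the square root of $(\cdot)^{2}+(|t|+T)^{2}$ and taking logarithms produces the third estimate; the exponent $(1-2\sigma+2[\sigma])n_K/4$ comes from rewriting $n_K(\tfrac{1}{2}+[\sigma]-\sigma)\cdot\log|1+s\pm iT-[\sigma]|=\tfrac{n_K(\tfrac{1}{2}+[\sigma]-\sigma)}{2}\log|1+s\pm iT-[\sigma]|^{2}$. The argument is essentially mechanical once the cancellation $|s\pm iT-1|\cdot|1-(s\pm iT)|^{-1}=1$ in the middle range is exploited; the only bookkeeping to watch carefully is the exponent arithmetic in the third case and the matching of $[\sigma]$ at the boundaries $\sigma=-\eta,-1/2,\dots$, which is harmless because Proposition~\ref{convex-bd} is uniform in $\sigma<0$.
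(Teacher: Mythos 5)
Your proposal is correct and follows essentially the same route as the paper: apply Lemma~\ref{bd-zeta-K}, Proposition~\ref{Rademacher}, or Proposition~\ref{convex-bd} according to the range of $\sigma$, use $|w-1|=|1-w|$ to cancel the denominator in Rademacher's bound, majorize each factor $|s\pm iT-k|$ by $\sqrt{(\sigma-k)^2+(|t|+T)^2}$, and take logarithms. The only cosmetic difference is that you pass through $\max_{\pm}|\mathcal{Z}(s\pm iT)|^N$ whereas the paper bounds both terms of $\frac{1}{2}(|\mathcal{Z}(s+iT)|^N+|\mathcal{Z}(s-iT)|^N)$ by the same explicit quantity directly; these are identical steps.
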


\begin{proof}
Since $\sigma\geq1+\eta>1$, by Lemma \ref{bd-zeta-K}, we derive
\begin{align*}
|f_N(s)|&\leq\frac{1}{2}\Big(|s+iT-1|^N|\zeta_K(s+iT)|^N+|s-iT-1|^N|\zeta_K(s-iT)|^N\Big)\\
&\leq\Big((\sigma-1)^2+(|t|+T)^2\Big)^{\frac{N}{2}}\zeta(\sigma)^{n_K{N}}.
\end{align*}
Now, the first estimate follows from taking logarithms and dividing both sides by $N$.

Secondly, if $-\eta\leq\sigma\leq1+\eta$, then by Proposition \ref{Rademacher}, we see that $|f_N(s)|$ is at most
\begin{align*}
&\frac{1}{2}\Big(3^N|s+iT+1|^N + 3^N|s-iT+1|^N\Big)    \Big(d_K\Big(\frac{\sqrt{(\sigma+1)^2+(|t|+T)^2}}{2\pi}\Big)^{n_K}\Big)^{\frac{(1+\eta-\sigma)N}{2}}\zeta(1+\eta)^{n_K{N}}\\
&\leq 3^N\Big((\sigma+1)^2+(|t|+T)^2\Big)^{\frac{N}{2}}\Big(d_K\Big(\frac{\sqrt{(\sigma+1)^2+(|t|+T)^2}}{2\pi}\Big)^{n_K}\Big)^{\frac{(1+\eta-\sigma)N}{2}}\zeta(1+\eta)^{n_K{N}}.
\end{align*}
Again, taking logarithms yields the second bound.

Lastly, for $\sigma\leq-\eta$, it follows from Proposition \ref{convex-bd} that
\begin{align*}
|f_N(s)|
&\leq\Big((\sigma-1)^2+(|t|+T)^2\Big)^{\frac{N}{2}}\Big(\frac{d_K}{(2\pi)^{n_K}}\Big)^{N(\frac{1}{2}-\sigma)}|(1+\sigma-[\sigma])^2+(|t|+T)^2|^{\frac{(1-2\sigma+2[\sigma])Nn_K}{4}}\\
&\times\Big(\prod_{j=1}^{-[\sigma]}((\sigma+j-1)^2+(|t|+T)^2)\Big)^{\frac{n_K{N}}{2}}\zeta(1-\sigma)^{n_K{N}}.
\end{align*}
We then conclude the proof by taking logarithms.
\end{proof}

Following \cite{BMOR20}, to proceed further, we introduce some notation and auxiliary functions. We first set
\begin{align*}
L_j(\theta)=\log\frac{(j+c+r\cos\theta)^2+(|r\sin\theta|+T)^2}{(T+2)^2}.
\end{align*}
and note that $L_j(\theta)$ is an even function of $\theta$. Moreover, if $\theta\in[0,\pi]$ and $T\geq5/7$, by the inequality $\log x\leq x-1$, one has $L_j(\theta)\leq\frac{L^\star_j(\theta)}{T+2}$, where
\begin{align*}
L^\star_j(\theta)=2r\sin\theta-4+\frac{7}{19}((j+c+r\cos\theta)^2+(r\sin\theta- 2)^2).
\end{align*}

In light of the choice of $F_{c,r}(\theta)$ (for Dirichlet $L$-functions) in \cite[Definition 5.10]{BMOR20}, we shall use the following $F_{c,r}(\theta)$ for $\zeta_K(s)$.

\begin{defn}
\label{Defn5.10}
For $\theta\in[-\pi,\pi]$, we let  $\sigma=c+r\cos\theta$, with $c-r>-\frac{1}{2}$, and $t=r\sin\theta$. For $\sigma\geq1+\eta$, we define
$$
F_{c,r}(\theta)=n_K\log\zeta(\sigma)+\frac{1}{2}L_{-1}(\theta)+\log(T+2). 
$$
For  $-\eta\leq\sigma\leq1+\eta$, we define
\begin{align*}
F_{c,r}(\theta)&= n_K\log\zeta(1+\eta)+\frac{n_K(1+\eta-\sigma)+2}{4}L_1(\theta)+\frac{n_K(1+\eta-\sigma)+2}{2}\log(T+2)\\
&+\frac{1+\eta-\sigma}{2}\Big(\log\frac{d_K}{{(2\pi)}^{n_K}}\Big)+\log3.
\end{align*}
For  $\sigma<-\eta$, we define
\begin{align*}
F_{c,r}(\theta)&= n_K\log\zeta(1-\sigma)+\frac{1}{2}L_{-1}(\theta)+\log(T+2)+\frac{1-2\sigma}{2}\log\Big(\frac{d_K(T+2)^{n_K}}{(2\pi)^{n_K}}\Big)\\
&+\frac{(1-2\sigma+2[\sigma])n_K}{4}L_{1-[\sigma]}(\theta)+\frac{n_K}{2}\sum\limits_{j=1}^{-[\sigma]}L_{j-1}(\theta).
\end{align*}
\end{defn}
 We note that $F_{c,r}(\theta)$ is an even function of $\theta$ satisfying $F_{c,r}(\theta)\geq\frac{1}{N}\log|f_{N}(c+re^{i\theta})|$. In order to bound $F_{c,r}(\theta)$, following \cite{BMOR20}, for $c\in\mathbb{R}$ and $r>0$, we define 
\[
\theta_{y}=
\begin{cases}
0 & \text{if $c+r\leq y$;} \\
\arccos\frac{y-c}{r} & \text{if $c-r\leq y\leq c+r$;} \\
\pi & \text{if $y \leq c-r$.}
\end{cases}
\]
For  the sake of convenience, we define
\begin{align*}
\kappa_1=\int_{\theta_{1 + \eta}}^{\theta_{-\eta}}  \frac{1+ \eta - \sigma}{2} d \theta 
 +  \int_{\theta_{-\eta}}^\pi \frac{1 - 2\sigma}{2} d \theta ,
\end{align*}
For  $J_1,J_2\in\mathbb{N}$, we shall set
\begin{align*}
\kappa_2(J_1)=\frac{\pi}{4J_1}\Big(\log\zeta(c+r)+2\sum_{j=1}^{J_1-1} \log\zeta \Big(c+r\cos\frac{\pi j}{2J_1}\Big)\Big),
\end{align*}
and
\begin{align*}
\kappa_3(J_2)=\frac{\pi-\theta_{1-c}}{2J_2}\Big(\log\zeta(1-c+r)+2\sum_{j=1}^{J_2-1} \log\zeta\Big(1-c-r\cos\Big(\frac{\pi j}{J_2}+\Big(1-\frac{j}{J_2}\Big)\theta_{1-c}\Big)\Big)\Big).
\end{align*}
In addition, we define
\begin{align*}
&\kappa_4=\frac{1}{4}\int_{\theta_{1+\eta}}^{\theta_{-\eta}}(1+\eta-\sigma)L^\star_1(\theta)d\theta,\\
&\kappa_5=\frac{1}{4}\int_{\theta_{-\eta}}^{\theta_{-1/2}}(1-2\sigma)L^\star_1(\theta)d\theta.
\end{align*}

Similar to \cite[Proposition 5.13]{BMOR20}, we have the following proposition regarding the upper bound of $\int_{0}^\pi F_{c,r} (\theta) d\theta$.

\begin{proposition}\label{bd-Fcr}
Let $c,r$, and $\eta$ be positive real numbers satisfying 
\begin{equation}\label{long-cond-1}
-\frac{1}{2}< c-r <-\eta <1+\eta < c
\end{equation}
and $0<\eta\leq\frac{1}{2}.$ Then for $T \geq \frac{5}{7}$, we have 
\begin{align*}
\int_{0}^\pi F_{c,r} (\theta) d\theta
&  \leq 
    n_K \int_0^{\theta_{1 + \eta}} \log \zeta (\sigma) d  \theta 
  + \frac{1}{2(T+2)}  \int_0^{\theta_{1 + \eta}} L_{-1}^{\star} (\theta) d\theta 
  + {\theta_{1 + \eta}} \log (T+2)\\
&  +  n_K ( \log \zeta (1 + \eta)) (\theta_{-\eta} -\theta_{1 + \eta} )  + \Big( \log \frac{d_K(T+2)^{n_K}}{(2\pi)^{n_K}}  \Big) \kappa_1 \\
& +  \frac{n_K}{T+2}\kappa_4 + \frac{1}{2(T+2)} \int_{\theta_{1 + \eta}}^{\theta_{-\eta}} L_1^\star (\theta) d \theta 
  +({\theta_{-\eta}}   - {\theta_{1 + \eta}})\log (3(T+2))\\
& +  n_K \int_{\theta_{-\eta}}^\pi \log \zeta (1 - \sigma) d\theta  
 +\frac{1}{2(T+2)} \int_{\theta_{-\eta}}^\pi L_{-1}^\star (\theta ) d \theta
 + (\pi   -{\theta_{-\eta}})\log (T+2)\\
&  +   \frac{n_K}{T+2} \kappa_5.
\end{align*}
\end{proposition}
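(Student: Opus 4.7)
The plan is to split $\int_0^\pi F_{c,r}(\theta)\,d\theta$ according to the piecewise definition of $F_{c,r}$ in Definition \ref{Defn5.10}, to insert the pointwise bound $L_j(\theta)\leq L_j^\star(\theta)/(T+2)$ (valid for $\theta\in[0,\pi]$ and $T\geq 5/7$), and then to regroup terms. Under \eqref{long-cond-1} the map $\theta\mapsto\sigma:=c+r\cos\theta$ is a strictly decreasing bijection of $[0,\pi]$ onto $[c-r,c+r]$ with $c+r>1+\eta$ (since $c>1+\eta$ and $r>c-(c-r)>0$) and $c-r>-\tfrac12$; hence the three cases of Definition \ref{Defn5.10} correspond precisely to $\theta\in[0,\theta_{1+\eta}]$, $[\theta_{1+\eta},\theta_{-\eta}]$, and $[\theta_{-\eta},\pi]$. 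On the third subinterval, $-\tfrac12<\sigma<0$ forces $[\sigma]=0$ by the convention on $[\cdot]$ near $-\tfrac12$, so the sum $\sum_{j=1}^{-[\sigma]}L_{j-1}$ is empty, $L_{1-[\sigma]}$ collapses to $L_1$, and the prefactor $\tfrac{1-2\sigma+2[\sigma]}{4}$ reduces to $\tfrac{1-2\sigma}{4}$; in addition, $\theta_{-1/2}=\pi$, which aligns the upper endpoint of $\kappa_5$ with the actual range of integration.

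Next I would integrate term by term. On $[0,\theta_{1+\eta}]$, substituting the first case of Definition \ref{Defn5.10} and using $L_{-1}\leq L_{-1}^\star/(T+2)$ produces the first three summands on the right of the stated bound. On $[\theta_{-\eta},\pi]$, the simplified third case yields $n_K\int_{\theta_{-\eta}}^\pi\log\zeta(1-\sigma)\,d\theta$, $\tfrac{1}{2(T+2)}\int_{\theta_{-\eta}}^\pi L_{-1}^\star\,d\theta$, $(\pi-\theta_{-\eta})\log(T+2)$, the partial $d_K$-contribution $\bigl(\int_{\theta_{-\eta}}^\pi\tfrac{1-2\sigma}{2}\,d\theta\bigr)\log\tfrac{d_K(T+2)^{n_K}}{(2\pi)^{n_K}}$, and, invoking $L_1\leq L_1^\star/(T+2)$ together with $\theta_{-1/2}=\pi$, the summand $\tfrac{n_K}{T+2}\kappa_5$. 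On the middle range $[\theta_{1+\eta},\theta_{-\eta}]$, the constant $n_K\log\zeta(1+\eta)$ integrates to the corresponding summand, while writing $\tfrac{n_K(1+\eta-\sigma)+2}{4}=\tfrac{n_K}{4}(1+\eta-\sigma)+\tfrac12$ and applying $L_1\leq L_1^\star/(T+2)$ produces $\tfrac{n_K}{T+2}\kappa_4+\tfrac{1}{2(T+2)}\int_{\theta_{1+\eta}}^{\theta_{-\eta}}L_1^\star\,d\theta$.

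The final step is to regroup the middle-range constant $\log 3$, the $\log(T+2)$ factors, and the $d_K$-dependent terms. Splitting $\tfrac{n_K(1+\eta-\sigma)+2}{2}\log(T+2)=\tfrac{n_K(1+\eta-\sigma)}{2}\log(T+2)+\log(T+2)$, the second piece combines with $\log 3$ to give $(\theta_{-\eta}-\theta_{1+\eta})\log(3(T+2))$ after integration, while the first piece merges with the middle-range $d_K$-term through the identity
\[
\frac{1+\eta-\sigma}{2}\log\frac{d_K}{(2\pi)^{n_K}}+\frac{n_K(1+\eta-\sigma)}{2}\log(T+2)=\frac{1+\eta-\sigma}{2}\log\frac{d_K(T+2)^{n_K}}{(2\pi)^{n_K}}.
\]
Integrating and appending the third-range contribution $\bigl(\int_{\theta_{-\eta}}^\pi\tfrac{1-2\sigma}{2}\,d\theta\bigr)\log\tfrac{d_K(T+2)^{n_K}}{(2\pi)^{n_K}}$ recovers exactly $\kappa_1\log\tfrac{d_K(T+2)^{n_K}}{(2\pi)^{n_K}}$, completing the identification with the right-hand side of the proposition.

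The main obstacle is the bookkeeping: matching every $L_j^\star$-integral to the intended $\kappa$, verifying that the simplifications $[\sigma]=0$ and $\theta_{-1/2}=\pi$ genuinely hold on the third range under \eqref{long-cond-1} so that the reduced third-piece of $F_{c,r}$ matches the claimed $\kappa_5$ integrand, and confirming that the displayed identity is the mechanism by which the middle-range $d_K$-contribution is promoted to the same $\log\tfrac{d_K(T+2)^{n_K}}{(2\pi)^{n_K}}$ that fits the $\kappa_1$ coefficient across both ranges.
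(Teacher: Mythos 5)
Your proposal is correct and follows essentially the same route as the paper's proof: both split $\int_0^\pi F_{c,r}\,d\theta$ at $\theta_{1+\eta}$ and $\theta_{-\eta}$, apply $L_j\leq L_j^\star/(T+2)$, use $1+\eta-\sigma\geq 0$ on the middle range, and combine the $\log 3$, $\log(T+2)$, and $d_K$-terms exactly as you describe. The only cosmetic difference is that you observe $[\sigma]=0$ on $[\theta_{-\eta},\pi]$ directly (since $c-r>-\tfrac12$ forces $\sigma>-\tfrac12$ there), whereas the paper writes out the general expansion with an infinite sum over $j\geq 1$ and then collapses it via $\theta_{-j\pm 1/2}=\pi$; the two observations are equivalent.
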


\begin{proof}
We first write
$$
\int_{0}^\pi F_{c,r} (\theta) d \theta = \int_{0}^{\theta_{1 + \eta}} F_{c,r} (\theta) d \theta+\int_{\theta_{1 + \eta}}^{\theta_{-\eta}}  F_{c,r} (\theta) d \theta +\int_{\theta_{-\eta}}^\pi F_{c,r} (\theta) d \theta.
$$
By the definition of $F_{c,r}(\theta)$, we have
\begin{align*}
\int_{0}^{\theta_{1 + \eta}} F_{c,r} (\theta) d \theta
&= n_K \int_0^{\theta_{1 + \eta}} \log \zeta (\sigma) d  \theta +  \frac{1}{2}  \int_0^{\theta_{1 + \eta}} L_{-1} (\theta) d\theta +  \int_0^{\theta_{1 + \eta}} \log (T+2) d\theta \\
&\leq  n_K \int_0^{\theta_{1 + \eta}} \log \zeta (\sigma) d  \theta 
+  \frac{1}{2(T+2)}  \int_0^{\theta_{1 + \eta}} L_{-1}^{\star} (\theta) d\theta 
+ \theta_{1 + \eta} \log (T+2).
\end{align*}

Secondly, we compute
\begin{align}\label{2nd-F}
 \begin{split}
\int_{\theta_{1 + \eta}}^{\theta_{-\eta}}  F_{c,r} (\theta) d \theta 
&=  n_K \int_{\theta_{1 + \eta}}^{\theta_{-\eta}} \log \zeta (1 + \eta) d \theta
+ \Big( \log \frac{d_K}{(2\pi)^{n_K}} \Big) \int_{\theta_{1 + \eta}}^{\theta_{-\eta}}  \frac{1+ \eta - \sigma}{2} d \theta + \log 3 \int_{\theta_{1 + \eta}}^{\theta_{-\eta}}   1 d \theta \\
& +  \int_{\theta_{1 + \eta}}^{\theta_{-\eta}} \frac{n_K(1+ \eta - \sigma) +2 }{4} L_1 (\theta) d \theta 
+ \log (T+2) \int_{\theta_{1 + \eta}}^{\theta_{-\eta}}\frac{n_K(1+ \eta - \sigma) +2 }{2} d \theta. 
  \end{split}
\end{align}
The first three integrals on the right of \eqref{2nd-F} are
\begin{align*}
n_K ( \log \zeta (1 + \eta)) (\theta_{-\eta} -\theta_{1 + \eta} )
 +\Big( \log \frac{d_K}{(2\pi)^{n_K}} \Big) \int_{\theta_{1 + \eta}}^{\theta_{-\eta}}  \frac{1+ \eta - \sigma}{2} d \theta 
+(\log 3)(\theta_{-\eta} -\theta_{1 + \eta} ).
\end{align*}
As $1 + \eta -\sigma \geq 0$ for $\theta \in [\theta_{1+\eta}, \theta_{-\eta}]$, it follows that the last two integrals on the right of \eqref{2nd-F} are
\begin{align*}
& \frac{n_K}{4}\int_{\theta_{1 + \eta}}^{\theta_{-\eta}}  (1+ \eta - \sigma)   L_1 (\theta) d \theta  
+ \frac{1}{2} \int_{\theta_{1 + \eta}}^{\theta_{-\eta}} L_1 (\theta) d \theta 
 + n_K \log (T+2) \int_{\theta_{1 + \eta}}^{\theta_{-\eta}} \frac{1+ \eta - \sigma}{2} d \theta \\
& + ({\theta_{-\eta}} - {\theta_{1 + \eta}})\log (T+2)\\
&\le  \frac{n_K}{(T+2)}\kappa_4
 + \frac{1}{2(T+2)} \int_{\theta_{1 + \eta}}^{\theta_{-\eta}} L_1^\star (\theta) d \theta  
 +  n_K \log (T+2) \int_{\theta_{1 + \eta}}^{\theta_{-\eta}} \frac{1+ \eta - \sigma}{2} d \theta  \\
& +({\theta_{-\eta}} - {\theta_{1 + \eta}}) \log (T+2).
\end{align*}

Lastly, we have
\begin{align}\label{3rd-F}
 \begin{split}
\int_{\theta_{-\eta}}^\pi F_{c,r} (\theta) d \theta
&=
n_K \int_{\theta_{-\eta}}^\pi \log \zeta (1 - \sigma) d\theta  
+ \frac{1}{2} \int_{\theta_{-\eta}}^\pi L_{-1} (\theta ) d \theta
 +\int_{\theta_{-\eta}}^\pi \log (T+2) d \theta \\
&   + \Big( \log \frac{d_K(T+2)^{n_K} }{(2\pi)^{n_K}}\Big)  \int_{\theta_{-\eta}}^\pi \frac{1 - 2\sigma}{2} d \theta   +  n_K \int_{\theta_{-\eta}}^{\theta_{-\frac{1}{2}}} \frac{1- 2\sigma}{4}L_1 (\theta) d \theta \\
&  + n_K  \sum_{j=1}^{\infty} \int_{\theta_{-j + \frac{1}{2}}}^{\theta_{-j - \frac{1}{2}}} \Big( \frac{1 - 2\sigma - 2j}{4} L_{j+1} (\theta) + \frac{1}{2} \sum_{k=1}^j L_{k-1} (\theta) \Big)d \theta.
  \end{split}
\end{align}
The first four integrals on the right of \eqref{3rd-F} are
\begin{align*}
& \leq   n_K \int_{\theta_{-\eta}}^\pi \log \zeta (1 - \sigma) d\theta  
 +\frac{1}{2(T+2)} \int_{\theta_{-\eta}}^\pi L_{-1}^\star (\theta ) d \theta
  + \log (T+2) (\pi   -{\theta_{-\eta}})\\
 &  +  \Big( \log \frac{d_K (T+2)^{n_K}}{(2\pi)^{n_K}}\Big)  \int_{\theta_{-\eta}}^\pi \frac{1 - 2\sigma}{2} d \theta .
\end{align*}
Note that as $-\frac{1}{2}< c-r$, we have $\theta_{-j+\frac{1}{2}} = \theta_{-j-\frac{1}{2}} = \pi$ for $j \geq 1$. Thus, the remaining integral and sum on the the right of \eqref{3rd-F} is
\begin{align*}
  n_K \int_{\theta_{-\eta}}^{\theta_{-\frac{1}{2}}} \frac{1- 2\sigma}{4}L_1 (\theta) d \theta \leq \frac{n_K}{T+2} \int_{\theta_{-\eta}}^{\theta_{-\frac{1}{2}}} \frac{1- 2\sigma}{4}L_1^\star (\theta) d \theta 
=   \frac{n_K}{T+2} \kappa_5 .
\end{align*}
Putting all the estimates together, we complete the proof.
\end{proof}

To control ``zeta integrals'' in the above proposition, we shall borrow two estimates from  \cite[Lemmata 5.14 and 5.15]{BMOR20} as follows.

\begin{lemma}\label{zeta-int}
Let $c,r$ and $\eta$ be positive real numbers, satisfying \eqref{long-cond-1}, and $J_1$ and $J_2$ be positive integers.  If $\theta_{1 + \eta} \leq 2.1$, then for $\sigma = c+ r\cos \theta$, one has
\begin{align*}
\int_0^{\theta_{1 + \eta}} \log \zeta (\sigma) d\theta \leq \frac{\log \zeta (1 + \eta) + \log \zeta (c)}{2} \Big(\theta_{1 + \eta} - \frac{\pi}{2} \Big) + \frac{\pi}{4 J_1} \log \zeta (c) + \kappa_2 (J_1).
\end{align*}
In addition, assuming further $r > 2c  -1$, one has
\begin{align*}
\int_{\theta_{-\eta}}^\pi \log \zeta(1 - \sigma) d \theta \leq \frac{\log \zeta (1 + \eta) + \log \zeta (c)}{2} (\theta_{1-c} - \theta_{-\eta}) + \frac{\pi - \theta_{1-c}}{2 J_2} \log \zeta (c) + \kappa_3 (J_2).
\end{align*}
\end{lemma}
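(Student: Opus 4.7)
The plan is to reduce the lemma to the analogous bounds for Dirichlet $L$-functions proved in \cite[Lemmata 5.14 and 5.15]{BMOR20}, or — if one prefers a self-contained derivation — to reproduce their convexity-and-quadrature argument with $\log\zeta$ in place of $\log|L(\sigma,\chi)|$. Either way, the only facts one uses are the sign of the first two derivatives of $\log\zeta$ on $(1,\infty)$.

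For the first estimate I would split the interval at $\theta=\pi/2$, the unique value of $\theta\in[0,\pi]$ at which $\sigma=c+r\cos\theta$ equals $c$. Writing $g(\theta)=\log\zeta(c+r\cos\theta)$, a direct computation gives
\begin{align*}
g''(\theta)=-r\cos\theta\,(\log\zeta)'(\sigma)+r^{2}\sin^{2}\theta\,(\log\zeta)''(\sigma).
\end{align*}
On $[0,\pi/2]$ we have $\cos\theta\ge 0$ while $(\log\zeta)'(\sigma)<0$ and $(\log\zeta)''(\sigma)>0$, so both terms are nonnegative and $g$ is convex. The composite trapezoidal rule on the equispaced nodes $\pi j/(2J_{1})$, $0\le j\le J_{1}$, therefore provides an upper bound for $\int_{0}^{\pi/2}g(\theta)\,d\theta$, and pulling out the endpoint contribution at $\sigma=c$ recognises the resulting sum as $\kappa_{2}(J_{1})+\frac{\pi}{4J_{1}}\log\zeta(c)$. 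On $[\pi/2,\theta_{1+\eta}]$ the sign of the first term of $g''(\theta)$ reverses, but the hypothesis $\theta_{1+\eta}\le 2.1$ is precisely the numerical condition that keeps $r^{2}\sin^{2}\theta\,(\log\zeta)''(\sigma)$ dominating $r|\cos\theta|\,|(\log\zeta)'(\sigma)|$ throughout the sub-interval; with $g$ still convex, the chord through $(\pi/2,\log\zeta(c))$ and $(\theta_{1+\eta},\log\zeta(1+\eta))$ dominates $g$ and, after integration, yields $\frac{\log\zeta(c)+\log\zeta(1+\eta)}{2}(\theta_{1+\eta}-\pi/2)$. Summing the two contributions gives the first inequality.

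The second estimate follows the same template after the change of variable $\tau=1-\sigma=1-c-r\cos\theta$. The hypothesis $r>2c-1$ forces $\tau(\pi)=1-c+r>c$, so $\tau$ crosses the value $c$ exactly once on $[\theta_{-\eta},\pi]$, at $\theta=\theta_{1-c}$. On $[\theta_{-\eta},\theta_{1-c}]$ I would apply the chord bound for the convex function $h(\theta)=\log\zeta(\tau(\theta))$ to pick up $\frac{\log\zeta(1+\eta)+\log\zeta(c)}{2}(\theta_{1-c}-\theta_{-\eta})$; on $[\theta_{1-c},\pi]$ the composite trapezoidal rule on the equispaced nodes $(1-j/J_{2})\theta_{1-c}+(j/J_{2})\pi$ gives an upper bound, and separating the endpoint contribution at $\tau=c$ yields $\kappa_{3}(J_{2})+\frac{\pi-\theta_{1-c}}{2J_{2}}\log\zeta(c)$.

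The one substantive obstacle is the convexity check on the sub-intervals where $\cos\theta<0$: there the two contributions to $g''$ (respectively $h''$) fight each other and the trapezoidal/chord bounds would fail if convexity were lost. The thresholds $\theta_{1+\eta}\le 2.1$ and $r>2c-1$ are calibrated exactly to this sign check; the verification is numerical, was carried out once and for all in \cite{BMOR20}, and — since it only uses the signs of $(\log\zeta)'$ and $(\log\zeta)''$ on $(1,\infty)$ — transfers to the Dedekind zeta-function setting without modification.
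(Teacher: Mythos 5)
The paper itself offers no proof of this lemma; the sentence preceding it says it is ``borrowed'' from \cite[Lemmata 5.14 and 5.15]{BMOR20}, and the body of your argument --- split the integral at the node where $\sigma$ (respectively $1-\sigma$) equals $c$, use the composite trapezoidal rule where the integrand is convex in $\theta$, and use the single chord on the remaining sub-interval --- is indeed the mechanism behind those two lemmata, so the overall shape of your reconstruction is on target. Your identification of $\kappa_2(J_1)$ and $\kappa_3(J_2)$ with the interior plus one endpoint of the trapezoidal sum, with the other endpoint pulled out as $\frac{\pi}{4J_1}\log\zeta(c)$ (resp.\ $\frac{\pi-\theta_{1-c}}{2J_2}\log\zeta(c)$), is exactly right.

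However, your account of which hypotheses enforce convexity is off in one place and hand-waved in another. For $h(\theta)=\log\zeta\bigl(1-c-r\cos\theta\bigr)$ on $[\theta_{-\eta},\pi]$ one has
\begin{align*}
h''(\theta)=r\cos\theta\,(\log\zeta)'(1-\sigma)+r^2\sin^2\theta\,(\log\zeta)''(1-\sigma),
\end{align*}
and on this whole interval $\cos\theta<0$ (since $\theta_{-\eta}>\pi/2$) while $(\log\zeta)'(1-\sigma)<0$, so the first term is \emph{positive}; the two contributions do not ``fight,'' and $h$ is convex automatically, with no numerical calibration needed. The hypothesis $r>2c-1$ therefore has nothing to do with sign control: it guarantees $(1-2c)/r>-1$, i.e.\ that $\theta_{1-c}<\pi$ so the split point actually lies inside $(\theta_{-\eta},\pi)$; without it the sub-interval $[\theta_{1-c},\pi]$ degenerates and $\kappa_3$ is meaningless. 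The genuine sign conflict occurs only for $g(\theta)=\log\zeta(c+r\cos\theta)$ on $[\pi/2,\theta_{1+\eta}]$, where $-r\cos\theta(\log\zeta)'(\sigma)<0$. There you assert that $\theta_{1+\eta}\le 2.1$ ``is precisely the numerical condition'' guaranteeing $g''\ge 0$, but that is too strong: $g''\ge 0$ there is equivalent to $(r^2-(c-\sigma)^2)\,(\log\zeta)''(\sigma)\ge(c-\sigma)\,|(\log\zeta)'(\sigma)|$, which depends on $r$, $c$, $\eta$ and the ratio $(\log\zeta)''/|(\log\zeta)'|$ along $[1+\eta,c]$, not on $\theta_{1+\eta}$ alone. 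That verification is the one real piece of work in the proof, and your proposal defers it entirely to a citation rather than supplying it; as written this is the gap in the argument.
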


\section{Completing the proof}\label{completingP}
Gathering \eqref{main-est} and Propositions \ref{Je-bd} and \ref{BT}, for
$$
-\frac{1}{2}<c-r<1-c < -\eta <0 <\frac{1}{4} \le \delta=2c- \sigma_1 -\frac{1}{2} < \frac{1}{2} < 1 < 1+\eta < c< \sigma_1 =c+ \frac{(c-1/2)^2}{r}< c+r,
$$
satisfying $\theta_{1 + \eta} \leq 2.1$,   we have
\begin{align} \label{semi-final}
 \begin{split}
&\Big| N_K (T)  - \frac{T}{\pi} \log \Big( d_K \Big( \frac{T}{2\pi e}\Big)^{n_K}\Big)  + \frac{r_1}{4}\Big|\\
& \leq  \frac{5}{2}  + |g_K(T)| + \frac{2n_K}{\pi} \log \zeta (\sigma_1) + \frac{\log \Big( \frac{1}{\sqrt{(c-1)^2 + T^2}}\frac{\zeta_K(c)}{\zeta_K(2c)} \Big)}{\log \frac{r}{c - \frac{1}{2}}} + \frac{1}{\pi \log \frac{r}{c- \frac{1}{2}}} \int_0^\pi F_{c,r}(\theta) d\theta + \frac{E_K(T,\delta)}{\pi},
  \end{split}
\end{align}
where  $ g_K(T)$ and $E_K(T,\delta)$ are defined as in \eqref{equ:g-K} and \eqref{def-EK-2}, respectively, and
\[
\log \frac{\zeta_K(c)}{\zeta_K(2c)} = \int_c^{2c} - \frac{\zeta'_K}{\zeta_K}  (\sigma) d\sigma \leq n_K \int_c^{2c}- \frac{\zeta'}{\zeta}  (\sigma) d\sigma  \leq n_K \log \frac{\zeta(c)}{\zeta(2c)}.
\]
Finally, using  \eqref{gK-bd}, Lemma \ref{EK-bd-final}, Proposition \ref{bd-Fcr}, and Lemma \ref{zeta-int} to bound \eqref{semi-final}  and recalling that $r_1+2r_2 =n_K$, for any $T_0\ge \frac{5}{7}$, we obtain
\begin{equation}\label{final-est}
\Big| N_K (T)  - \frac{T}{\pi} \log \Big( d_K \Big( \frac{T}{2\pi e}\Big)^{n_K}\Big)  + \frac{r_1}{4}\Big|\le   C_1  \log\Big(  \frac{d_K(T+2)^{n_K}}{(2\pi)^{n_K}}  \Big)  +C_2  n_K  +C_3
\end{equation}
whenever $T\ge T_0$, where
\begin{align*}
C_1= \kappa_1 \Big(\pi \log \frac{r}{c- \frac{1}{2}}\Big)^{-1}, 
\end{align*}
\begin{align*}
C_2&=\frac{1}{25 T_0} +  \frac{2}{\pi} \log \zeta (\sigma_1)+\frac{640\delta -112}{1536(3T_0-1)}
+\max\Big\{0,   \frac{856\delta -151}{1536(3T_0+ 2)} -  \frac{640\delta -112}{1536(3T_0-1)}  \Big\}+ \frac{1}{2^{10}} \\
&+ \Big(\pi \log \frac{r}{c- \frac{1}{2}}\Big)^{-1} 
\Big( \frac{\log \zeta (1 + \eta) + \log \zeta (c)}{2} \Big(\theta_{1 + \eta} - \frac{\pi}{2} \Big) + \frac{\pi}{4 J_1} \log \zeta (c) + \kappa_2 (J_1) \Big) \\
&+\Big(\pi \log \frac{r}{c- \frac{1}{2}}\Big)^{-1} 
\Big( \frac{\log \zeta (1 + \eta) + \log \zeta (c)}{2} (\theta_{1-c} - \theta_{-\eta}) + \frac{\pi - \theta_{1-c}}{2 J_2} \log \zeta (c) + \kappa_3 (J_2) \Big) \\
&+\Big(\pi \log \frac{r}{c- \frac{1}{2}}\Big)^{-1} 
\Big(( \log \zeta (1 + \eta)) (\theta_{-\eta} -\theta_{1 + \eta} )
+   \max\Big\{0,\frac{\kappa_4+\kappa_5}{T_0 +2} \Big\}  + \pi \log \frac{\zeta(c)}{\zeta(2c)} \Big),
\end{align*}
\begin{align*}
C_3&= \frac{5}{2} +\Big(\pi  \log \frac{r}{c- \frac{1}{2}}\Big)^{-1}\Big( \pi \log \Big(1+\frac{  2  }{T_0}\Big) + ({\theta_{-\eta}}   - {\theta_{1 + \eta}})\log 3  \Big) \\
&+    \max \Big\{0, \Big(\pi \log \frac{r}{c- \frac{1}{2}}\Big)^{-1} \Big(\frac{1}{2(T_0+2)} \Big( \int_0^{\theta_{1 + \eta}} L_{-1}^{\star} (\theta) d\theta 
+  \int_{\theta_{1 + \eta}}^{\theta_{-\eta}} L_1^\star (\theta) d \theta 
+ \int_{\theta_{-\eta}}^\pi L_{-1}^\star (\theta ) d \theta \Big) \Big)\Big\}.
\end{align*}

For $T_0=1$ and $T_0=10$, choosing $J_1=64$ and $J_2=39$, via a Maple numerical computation, we have the following table of admissible $(C_1,C_2,C_3)$.
\begin{table}[htbp] 
\centering
\begin{tabular}{ |c|c|c||c|c|c|c|c|   } 
 \hline
 \multicolumn{3}{|c||}{  } &  & \multicolumn{2}{c|}{ $T\ge 1$} & \multicolumn{2}{c|}{ $T\ge 10$} \\
 \hline  
 $c$ & $r$ & $\eta$ &  $C_1$ & $C_2$ & $C_3$  & $C_2$ & $C_3$ \\ 
 \hline
 1.000011314 & 1.064340602 & $4.2826451\cdot 10^{-6}$   & 0.22737  &  23.02528  &   4.51954   & 22.97204 & 3.30668  \\ 
 \hline  
 1.042877508 & 1.259860485 & 0.01737451737   & 0.24493   & 6.66558 &  4.21201 & 6.60397  & 3.12362 \\ 
 \hline 
 1.079779637 & 1.410370323 & 0.03441682600   & 0.26304   & 5.22032   & 4.08149  & 5.15251 & 3.05074  \\ 
  \hline 
 1.114294066 & 1.538391756 & 0.05247813411   & 0.28032   & 4.43521   &  4.00936  & 4.36214 & 
3.01124 \\ 
 \hline 
 1.145720440 & 1.645584376 & 0.07107039918   & 0.29590  &  3.93889  &  3.96852 & 3.86136 & 2.98903 \\ 
 \hline 
\end{tabular}
   \caption{Choices of parameters $(c,r,\eta)$ and resulting admissible $(C_1,C_2,C_3)$}\label{table2} 
\end{table}

One may find functioning Maple code at \url{https://arxiv.org/abs/2102.04663}

\section*{Acknowledgments}
The authors would like to thank Nathan Ng for the encouragement and discussion for this project. They are also thankful to the referees for making helpful comments and suggestions.

\begin{rezabib}

\bib{Ba18}{article}{
    AUTHOR = {Backlund, R. J.},
     TITLE = {\"{U}ber die {N}ullstellen der {R}iemannschen {Z}etafunktion},
   JOURNAL = {Acta Math.},
  FJOURNAL = {Acta Mathematica},
    VOLUME = {41},
      YEAR = {1916},
    NUMBER = {1},
     PAGES = {345--375},
      ISSN = {0001-5962},
   MRCLASS = {DML},
  MRNUMBER = {1555156},
       DOI = {10.1007/BF02422950},
       URL = {https://doi.org/10.1007/BF02422950},}

\bib{BMOR20}{article}{
AUTHOR={Bennett, M. A.},
AUTHOR={Martin, G.},
AUTHOR={O'Bryant, K.},
AUTHOR={Rechnitzer, A.},
TITLE = {Counting zeros of {D}irichlet {$L$}-functions},
   JOURNAL = {Math. Comp.},
  FJOURNAL = {Mathematics of Computation},
    VOLUME = {90},
      YEAR = {2021},
    NUMBER = {329},
     PAGES = {1455--1482},
      ISSN = {0025-5718},
   MRCLASS = {11N13 (11M20 11M26 11N37 11Y35 11Y40)},
  MRNUMBER = {4232231},
       DOI = {10.1090/mcom/3599},
       URL = {https://doi.org/10.1090/mcom/3599},
}

\bib{KN12}{article}{
    AUTHOR = {Kadiri, H.},
    AUTHOR = {Ng, N.},
     TITLE = {Explicit zero density theorems for {D}edekind zeta functions},
   JOURNAL = {J. Number Theory},
  FJOURNAL = {Journal of Number Theory},
    VOLUME = {132},
      YEAR = {2012},
    NUMBER = {4},
     PAGES = {748--775},
      ISSN = {0022-314X},
   MRCLASS = {11R42 (11M41)},
  MRNUMBER = {2887617},
MRREVIEWER = {St\'{e}phane R. Louboutin},
       DOI = {10.1016/j.jnt.2011.09.002},
       URL = {https://doi.org/10.1016/j.jnt.2011.09.002},
}

\bib{LMO}{article}{
    AUTHOR = {Lagarias, J. C.},
    AUTHOR = {Montgomery, H. L.},
    AUTHOR = {Odlyzko, A. M.},
     TITLE = {A bound for the least prime ideal in the {C}hebotarev density
              theorem},
   JOURNAL = {Invent. Math.},
  FJOURNAL = {Inventiones Mathematicae},
    VOLUME = {54},
      YEAR = {1979},
    NUMBER = {3},
     PAGES = {271--296},
      ISSN = {0020-9910},
   MRCLASS = {12A75 (10H08)},
  MRNUMBER = {553223},
MRREVIEWER = {Larry J. Goldstein},
       DOI = {10.1007/BF01390234},
       URL = {https://doi.org/10.1007/BF01390234},
}

\bib{LO}{article}{
    AUTHOR = {Lagarias, J. C.},
    AUTHOR = {Odlyzko, A. M.},
     TITLE = {Effective versions of the {C}hebotarev density theorem},
 BOOKTITLE = {Algebraic number fields: {$L$}-functions and {G}alois
              properties ({P}roc. {S}ympos., {U}niv. {D}urham, {D}urham,
              1975)},
     PAGES = {409--464},
      YEAR = {1977},
   MRCLASS = {12A75},
  MRNUMBER = {0447191},
MRREVIEWER = {Matti Jutila},
}

\bib{McC84}{article}{
    AUTHOR = {McCurley, K. S.},
     TITLE = {Explicit estimates for the error term in the prime number
              theorem for arithmetic progressions},
   JOURNAL = {Math. Comp.},
  FJOURNAL = {Mathematics of Computation},
    VOLUME = {42},
      YEAR = {1984},
    NUMBER = {165},
     PAGES = {265--285},
      ISSN = {0025-5718},
   MRCLASS = {11N13 (11-04 11Y35)},
  MRNUMBER = {726004},
MRREVIEWER = {Matti Jutila},
       DOI = {10.2307/2007579},
       URL = {https://doi.org/10.2307/2007579},
}

\bib{Ra59}{article}{
    AUTHOR = {Rademacher, H.},
     TITLE = {On the {P}hragm\'{e}n-{L}indel\"{o}f theorem and some applications},
   JOURNAL = {Math. Z.},
  FJOURNAL = {Mathematische Zeitschrift},
    VOLUME = {72},
      YEAR = {1959/1960},
     PAGES = {192--204},
      ISSN = {0025-5874},
   MRCLASS = {10.00 (30.00)},
  MRNUMBER = {0117200},
MRREVIEWER = {N. G. de Bruijn},
       DOI = {10.1007/BF01162949},
       URL = {https://doi.org/10.1007/BF01162949},
}

\bib{Ro41}{article}{
    AUTHOR = {Rosser, B.},
     TITLE = {Explicit bounds for some functions of prime numbers},
   JOURNAL = {Amer. J. Math.},
  FJOURNAL = {American Journal of Mathematics},
    VOLUME = {63},
      YEAR = {1941},
     PAGES = {211--232},
      ISSN = {0002-9327},
   MRCLASS = {10.0X},
  MRNUMBER = {3018},
MRREVIEWER = {R. D. James},
       DOI = {10.2307/2371291},
       URL = {https://doi.org/10.2307/2371291},
}

\bib{Tr15}{article}{
    AUTHOR = {Trudgian, T. S.},
     TITLE = {An improved upper bound for the error in the zero-counting
              formulae for {D}irichlet {$L$}-functions and {D}edekind
              zeta-functions},
   JOURNAL = {Math. Comp.},
  FJOURNAL = {Mathematics of Computation},
    VOLUME = {84},
      YEAR = {2015},
    NUMBER = {293},
     PAGES = {1439--1450},
      ISSN = {0025-5718},
   MRCLASS = {11M06 (11M26 11R42)},
  MRNUMBER = {3315515},
MRREVIEWER = {Caroline L. Turnage-Butterbaugh},
       DOI = {10.1090/S0025-5718-2014-02898-6},
       URL = {https://doi.org/10.1090/S0025-5718-2014-02898-6},
}

\end{rezabib}

%
%
%
%
%
%
%
%
%
%
%
%
%
%

\end{document}